\newcounter{remarknum}
\def\remark{\@ifnextchar[{\@with}{\@without}}
\def\@with[#1]{\refstepcounter{remarknum}\textit{Remark~\theremarknum.} \label{rem:#1}}
\def\@without{\refstepcounter{remarknum}\textit{Remark~\theremarknum.} }
\DeclareRobustCommand{\NLprefix}[3]{#2}
\numberwithin{equation}{section}
\theoremstyle{plain}
\newtheorem{lemma}{Lemma}
\newtheorem{proposition}{Proposition}
\newtheorem{theorem}{Theorem}
\DeclareSymbolFont{bbold}{U}{bbold}{m}{n}
\DeclareSymbolFontAlphabet{\mathbbold}{bbold}
\newcommand{\numberthis}{\stepcounter{equation}\tag{\theequation}}
\newcommand{\alert}[1]{\textcolor{red}{#1}} 
\renewcommand{\cite}{\alert{cite[?]}}
\renewcommand\citet\Citet
\renewcommand\citep\Citep
\renewcommand\citealt\Citealt
\renewcommand\citealp\Citealp
\renewcommand\citeauthor\Citeauthor
\renewcommand{\a}{\alpha}
\newcommand{\A}{\mathcal{A}}
\renewcommand{\b}{\beta}
\renewcommand{\d}{\delta}
\newcommand{\dd}{\,\mathrm{d}}
\newcommand{\E}{\mathbb{E}}
\newcommand{\g}{\gamma}
\newcommand{\ind}{\mathbbold{1}}
\newcommand{\N}{\mathbb{N}}
\renewcommand{\O}{\mathcal{O}}
\renewcommand{\P}{\mathbb{P}}
\renewcommand{\r}{\rho}
\newcommand{\Z}{\mathbb{Z}}
\newcommand{\ntoinfty}{{n\rightarrow\infty}}
\renewcommand{\hat}{\widehat}
\renewcommand{\tilde}{\widetilde}
\def\gg{\mathrm{GI/GI/1}}
\def\gm{\mathrm{GI/M/1}}
\def\mg{\mathrm{M/GI/1}}
\def\mm{\mathrm{M/M/1}}
\def\fb{\mathrm{FB}}
\def\fifo{\mathrm{FIFO}}
\def\mlf{\mathrm{MLF}}
\def\opt{\mathrm{OPT}}
\def\rmlf{\mathrm{RMLF}}
\def\dirmlf{\mathrm{e\rmlf}}
\def\srpt{\mathrm{SRPT}}
\renewcommand{\O}{O}
\begin{document}
\title{Achievable Performance of Blind Policies in Heavy Traffic}
\author{
\begin{tabular}{ccccccccc}
Nikhil Bansal\textsuperscript{a,b} &&& Bart Kamphorst\textsuperscript{a} &&& Bert Zwart\textsuperscript{a,b} \\
\texttt{n.bansal@tue.nl} &&& \texttt{b.kamphorst@cwi.nl} &&& \texttt{b.zwart@cwi.nl}
\end{tabular} \\
\begin{tabular}{l}
\footnotesize{\textsuperscript{a} Centrum Wiskunde \& Informatica, P.O. Box 94079, 1090 GB Amsterdam, the Netherlands} \\
\footnotesize{\textsuperscript{b} Technische Universiteit Eindhoven, P.O. Box 513, 5600 MB Eindhoven, the Netherlands}
\end{tabular}
}
\date{\small{\today}}
\maketitle

\begin{abstract}
For a $\gg$ queue, we show that the average sojourn time under the (blind) Randomized Multilevel Feedback algorithm is no worse than that under the Shortest Remaining Processing Time algorithm times a logarithmic function of the system load. Moreover, it is verified that this bound is tight in heavy traffic, up to a constant multiplicative factor. We obtain this result by combining techniques from two disparate areas: competitive analysis and applied probability.
\end{abstract}

\textit{Keywords: $\gg$ queue, response time, heavy traffic, competitive ratio, blind policies, shortest remaining processing time, randomized multilevel feedback}

\section{Introduction}
One of the most relevant and widely studied measures of quality of service in a $\gg$ queue is the average sojourn time, also known as response time or flow time, defined as the average time spent by a job from its arrival in the system until its completion \citep{bansal2001analysis, bansal2005average, bansal2006handling, becchetti2004nonclairvoyant, boxma2007tails, lin2011heavy, nair2010tailrobust, nuyens2008preventing, wierman2005nearly, wierman2008scheduling, wierman2012tailoptimal}. We consider the most basic setting of a single machine with pre-emption, i.e.\ jobs can be interrupted arbitrarily and resumed later without any penalty. It is well-known that the Shortest Remaining Processing Time ($\srpt$) policy, that at any time works on the job with the least remaining processing time, is the optimal policy for every problem instance (or equivalently for every sample path) for minimizing the sojourn time \citep{schrage1968letter}. However, to run $\srpt$ one needs the exact knowledge of all job sizes. This information may not be available in many settings; specifically, jobs sizes may only be known approximately, or may not be known at all \citep{lu2004sizebased}. For this reason one may have to be content with more generally applicable policies.

In this paper we are interested in policies that do not require the knowledge of job sizes in their scheduling decisions. We refer to such policies as {\em blind} policies. More formally, in a blind policy the scheduler is only aware of the existence of a job and how much processing it has received thus far. The size of the job becomes known to the scheduler only when it terminates and leaves the system. Observe that the class of blind policies contains several well-studied policies, such as Processor Sharing (also known as Round Robin) \citep{kleinrock1976queueingcomputer}, Foreground-Background \citep{nuyens2008foreground}, and First In First Out \citep{asmussen2003applied}.

It is natural to ask how much this inability to use the knowledge of sizes can hurt performance. In particular, how much can the average sojourn time between $\srpt$ and an optimal blind policy differ for a given $\gg$ queue? As an illustration, let us consider the $\mm$ queue. In this setting, all blind policies are identical due to the memoryless nature of the job size distribution. More precisely, \citet{conway1967theory} state that any blind policy has an average sojourn time equal to $\E[B]/(1-\rho)$, where $\E[B]$ is the average job size and $\rho$ is the load of the system. On the other hand, if job sizes are known upon arrival, then \citet{bansal2005average} derives that the average sojourn time $\E[T_\srpt]$ under $\mm/\srpt$ is
\begin{equation}
 \E[T_\srpt] = (1+o(1)) \frac{1}{\log \left( \frac{e}{1-\rho}\right)} \frac{\E[B]}{1-\rho},
 \label{eq:mmsrpt}
\end{equation}
where $o(1)$ vanishes as $\rho$ approaches one. 
That is, the $\srpt$ policy performs better by a factor $\log (e/(1-\rho))$ in heavy traffic. So while $\srpt$ can be arbitrarily better than a blind policy for a $\mm$ queue as the load approaches one, this improvement factor grows quite mildly.

The performance of $\srpt$ as a function of load can be dramatically different for heavy-tailed distributions. \citet{bansal2006handling} and \citet{lin2011heavy} show that the growth factor of the average sojourn time in heavy traffic can be much smaller than $1/(1-\r)$ even in $\mg$ queues. For example, if the job sizes follow a Pareto($\b$) distribution with $\b\in(1,2)$, then the growth factor of the average sojourn time $\E[T_{\srpt}]$ is $\E[B] \log (1/(1-\rho))$, up to constant factors depending on $\b$. On the other hand, \citet{kleinrock1976queueingcomputer} states that Processor Sharing has an average sojourn time of $\E[B]/(1-\rho)$ in any $\mg$ queue. As this example illustrates, it is conceivable that for a general distribution, the gap between blind policies and $\srpt$ can be much larger then in the $\mm$ case.

Another subfield of computer science where the performance improvement of $\srpt$ over blind policies has been studied is {\em competitive analysis} \citep{borodin1998online, fiat1998online, pruhs2004online}, which generally regards worst case analyses of algorithms. The study of competitive analysis of blind scheduling policies was initiated by \citet{motwani1994nonclairvoyant}, who showed that no blind deterministic algorithm\footnote{ Note that $\srpt$ is deterministic, but not blind.} can have a better competitive ratio than $\Omega(m^{1/3})$ for the problem of minimizing the average sojourn time, where $m$ is the number of jobs in an instance. 
Motwani et al.\ also showed that no blind randomized algorithm can have a competitive ratio better than $\Omega(\log(m))$. In a significant breakthrough, \citet{kalyanasundaram2003minimizing} gave an elegant and non-trivial randomized algorithm that they called Randomized Multilevel Feedback ($\rmlf$) and proved that it has competitive ratio of at most $\O(\log (m) \log( \log(m)))$. Later, \citet{becchetti2004nonclairvoyant} showed that $\rmlf$ is in fact an $\O(\log(m))$-competitive randomized algorithm and hence the best possible (up to constant factors). Additional background on multilevel algorithms can be found in \citet{kleinrock1976queueingcomputer}, and an analysis of the average sojourn time under such algorithms is performed in \citet{aalto2006mean}.

The result in \citet{becchetti2004nonclairvoyant} is derived under the assumption that job sizes are bounded from below by a strictly positive constant, an assumption which is removed in this paper. This ``extended'' version of $\rmlf$ is denoted by $\dirmlf$.

The insights from applied probability and competitive analysis concerning the relation between blind policies and $\srpt$ can be combined when $m$ is taken as the number of jobs in a regeneration cycle, which has an expected value of the order $1/(1-\r)$. We make this precise in our main theorem and its proof. The main theorem shows that, for a $\gg$ queue, the gap between $\srpt$ and the best blind policy $\A$ for that system is at most $\log(1/(1-\rho))$, up to constant factors. More specifically, we show that this growth factor is a guaranteed upper bound on the gap between $\srpt$ and the $\dirmlf$ algorithm. That is, we show that
\begin{equation}
 \E[T_\A] \leq \E[T_\dirmlf] = \O \left( \log \left( \frac{1}{1-\r} \right) \E[T_{\srpt}] \right)
 \label{eq:introresult}
\end{equation}
as $\rho$ grows to one. 
We note that the implementation of the $\rmlf$ algorithm does not depend on the distributions of interarrival times and job sizes and is therefore applicable to every $\gg$ queue; this property may not hold for an optimal blind policy $\A$ in general.
%
%
%

The second main contribution of this paper is the proof of \eqref{eq:introresult} itself. It involves a novel combination of techniques from competitive analysis and applied probability. Using a renewal argument, we consider the average sojourn time $\E[T_{\rmlf}]$ of jobs in a general busy period, and subsequently distinguish two types of busy periods (small and large) by the number of jobs. For small busy periods, we apply a worst-case performance bound of $\rmlf$ from the study of competitive analysis. For large busy periods, we derive the heavy traffic behaviour of moments of two functionals: the busy period duration and the number of jobs in a busy period. In particular, we show that the $\kappa$-th moment of both of these functionals behaves like $\O((1-\rho)^{1-2\kappa})$ for $\kappa \geq 1$; a new result, which may facilitate future instances where competitive analysis and regenerative process theory can be combined to obtain information about algorithms under uncertainty. To prove these bounds, we rely on properties of ladder height distributions derived in \citet{asmussen2003applied} and \citet{lotov2002inequalities}.

This paper is organized as follows. A detailed model description and notation is introduced in Section~\ref{sec:preliminaries}. Section~\ref{sec:competitive} clarifies the concept of a competitive ratio and describes the $\rmlf$ algorithm. Additionally, Section~\ref{subsec:dirmlf} relaxes the constraints on $\rmlf$ while preserving the competitive ratio. The main result, Theorem~\ref{thm:main}, is presented in Section~\ref{sec:main}, whereas its proof is given in Section~\ref{sec:proofmain}. Propositions required for the main theorem are proven in Section~\ref{sec:moments}. Finally, Section~\ref{sec:conclusion} concludes the paper.

\section{Preliminaries}
\label{sec:preliminaries}
This section introduces a general framework for sequences of $\gg$ queues, so that we may analyse their limiting behaviour in further sections. In particular, the model allows for a heavy traffic analysis of the average sojourn time and various other functionals.

\subsection*{Sequence of queues}
Consider a sequence of $\gg$ queues, indexed by $n\geq 1$, where jobs arrive sequentially with independent and identically distributed (i.i.d.) sizes $B^{(n)}_i,i\in\{1,2,\ldots\},$ chosen from a distribution $F^{(n)}_B$. The jobs are then processed by a single server with unit speed. The time between two consecutive job arrivals is given by the i.i.d. interarrival times $A^{(n)}_i,i\in\{1,2,\ldots\},$ chosen from a distribution $F^{(n)}_A$. All jobs and interarrival times are assumed to be positive, i.e.\ the support of $F^{(n)}_A$ and $F^{(n)}_A$ is contained in $(0,\infty)$. For notational convenience, we define $A^{(n)}:=A^{(n)}_1$ and $B^{(n)}:=B^{(n)}_1$. 

Assume that there exist $\r_0 \in (0,1)$ and $A_{\min}>0$, both independent of the system index $n$, such that the inequalities $\r_0 A_{\min} \leq \r_0 \E[A^{(n)}] \leq \E[B^{(n)}] < \E[A^{(n)}]$ are respected. The load of the system is denoted by $\r^{(n)}:= \E[B^{(n)}]/\E[A^{(n)}]\in[\r_0,1)$, and is interpreted as the fraction of time that the server is busy. As is customary in the literature on heavy traffic analysis \citep{gromoll2004diffusion, puha2015diffusion}, we assume $\lim_{\ntoinfty} \r^{(n)} =1$. The mean amount of work that a server completes between two consecutive arrivals is represented by $\mu^{(n)}:= \E[A^{(n)}] - \E[B^{(n)}] = \E[A^{(n)}] (1-\r^{(n)})$.
 
Furthermore, we require that the interarrival times have finite variance for all $n$ and additionally that $\limsup_{\ntoinfty} \E[(A^{(n)})^2]<\infty$. Since a queue can only form when a job arrives to a non-empty system, we pose the final requirement that for some $\d>0$ and $\g>0$ independent of $n$, the system satisfies $\P(B^{(n)} - A^{(n)} \geq \d)\geq \g$.
\vspace{\baselineskip}

\textbf{Example 1.} In order to interpret some of our obtained results, one may compare them to a $\mg$ queue that is send into heavy traffic in a natural manner. Specifically, assume that both the $A_i$'s and $B_i$'s have unit mean and consider interarrival times $A^{(r)}_i=A_i/r,i\in\{1,2,\ldots\}, r\in(0,1)$. This model experiences a load of $\r = \E[B]/\E[A^{(r)}]=r$ and is exposed to heavy traffic as $r$ tends to one due to decreasing interarrival times. The model fits in the framework described above by letting $A^{(n)}_i = A_i/(1-1/n), B^{(n)}_i = B_i$ and $\r^{(n)} = 1-1/n$, and is referred to as the \textit{Example Model}. All further remarks on the Example Model are emphasised by superscripts $r$ for all related variables and functionals; e.g.\ the notation $A^{(r)}_i$ indicates the $i$-th interarrival time in this Example Model.

\subsection*{Queueing functionals}
The sojourn time of a job is the amount of time it spends in the system, namely its service completion time minus its arrival time. Given a scheduling policy $\pi$, we denote the average sojourn time of a generic job by $\E[T^{(n)}_{\pi}]$. The steady-state cumulative amount of work in the system is represented by $V^{(n)}$, whose distribution has an atom at zero that corresponds to the times when the server is idle. The steady-state duration of such an idle period is denoted by $I^{(n)}$. Idle periods are ended by the arrival of a new job, which initiates a busy period. A busy period finishes at the earliest subsequent time for which the system is empty again. The steady-state duration of a busy period is represented by $P^{(n)}$, whereas the total number of arrivals during a busy period is denoted by $N^{(n)}$. Finally, the steady-state cumulative amount of work in the system \textit{at an arrival instance} is represented by $W^{(n)}$.

\subsection*{Scheduling policies}
A scheduling policy $\pi$ is an algorithm or a rule which specifies which job receives service at any time in the system. For the $\gg$ queue under consideration, such a policy prescribes the behaviour of a single server under the relaxation that jobs can be \textit{pre-empted}; that is, jobs can be interrupted at any point during their execution and can be resumed later from this point without any penalty. Of the large class of scheduling policies that apply to this system, we consider only those policies $\pi$ that satisfy the following two criteria (quoted from \citet{wierman2012tailoptimal}, after \citet{stolyar2001largest}):
\begin{enumerate}
 \item $\pi$ is \textit{non-anticipative}: a scheduling decision at time $t$ does not depend on information about customers that arrive beyond time $t$.
 \item $\pi$ is \textit{non-learning}: the scheduling decisions cannot depend on information about previous busy periods. That is, a scheduling decision on a sample path cannot change when the history before the current busy period is changed.
\end{enumerate}
Of special interest are those scheduling policies $\pi$ that additionally obey the following characteristic:
\begin{enumerate}
 \setcounter{enumi}{2}
 \item $\pi$ is \textit{blind}: the scheduling decisions do not depend on the sizes of the jobs. That is, the scheduling decisions on a sample path up to time $t$ cannot change when the sizes of jobs that have not finished at that time are altered (in such a way that the jobs remain unfinished).
\end{enumerate}
Policies that satisfy all above criteria are very common: First In First Out ($\fifo$), Processor Sharing and Foreground-Background ($\fb$) are all blind policies within the specified subclass of scheduling policies. On the other hand, policies like Shortest Job First or Shortest Remaining Processing Time ($\srpt$) are {\em non}-blind elements of the specified subclass as they require knowledge of the job sizes when making a scheduling decision. 

We let $\A^{(n)}$ denote a blind policy that minimizes the average sojourn time over the space of all blind policies for the $n$-th $\gg$ queue. In general, $\A^{(n)}$ could depend on the distributions $F^{(n)}_A$ and $F^{(n)}_B$ that specify the $\gg$ queue. 
We denote $\rmlf$ and $\dirmlf$ to be the randomized blind algorithms that are formalized in Sections~\ref{subsec:rmlf} and \ref{subsec:dirmlf}, respectively. The implementation of the $\rmlf$ and $\dirmlf$ algorithms does not depend on $F^{(n)}_A$ and $F^{(n)}_B$ and is therefore independent of the system index $n$.

Finally, we call a scheduling policy $\pi$ \textit{work-conserving} if it always has the server working at unit speed whenever work is present in the system. One can easily verify that all above policies, including $\A^{(n)}$, are work-conserving.

\subsection*{Asymptotic relations}
We use the standard notation that for two functions $f(n)$ and $g(n)$, $f(n) = \O(g(n))$ and $f(n) = o(g(n))$ if $\limsup_{\ntoinfty} f(n)/g(n) <\infty$ and $\limsup_{n\rightarrow \infty} f(n)/g(n) = 0$, respectively. Similarly, $f(n) = \Omega(g(n))$ means $\liminf_{\ntoinfty} f(n)/g(n) > 0$ and $f(n) = \Theta(g(n))$ is equivalent to $0 < \liminf_{\ntoinfty} f(n)/g(n) \leq \limsup_{\ntoinfty} f(n)/g(n) < \infty$.

Final notational conventions in this paper are the floor-function $\lfloor x \rfloor := \sup\{m\in\N: m\leq x\}$ and the indicator function $\ind(\textit{[logical expression]})$ that takes value $1$ if the logical expression is true, and value $0$ otherwise. 

\section{Competitive analysis of scheduling policies}
\label{sec:competitive}
In this section, we describe some relevant definitions and results from the area of competitive analysis, which deals with the worst case analysis of algorithms. We restrict our presentation here only to the competitive analysis of scheduling algorithms with respect to average sojourn time. Subsequently, we introduce the original $\rmlf$ algorithm and its extension $\dirmlf$.

A scheduling problem instance $\mathcal{I}$ consists of a collection of jobs specified by their sizes and their arrival times. We say that an instance has size $m$, i.e.\ $|\mathcal{I}|=m$, if it consists of $m$ jobs. For an instance $\mathcal{I}$, we denote the optimal average sojourn time possible for this instance by $\E[T_{\opt}(\mathcal{I})]$, which for our purposes is same as $\E[T_{\srpt}(\mathcal{I})]$. 

For a blind deterministic algorithm $\pi$, we let $\E[T_\pi(\mathcal{I})]$ denote the average sojourn time when the instance $\mathcal{I}$ is executed according to the algorithm $\pi$. We say that the algorithm $\pi$ has competitive ratio $c(m)$ if
\[
 \sup_{\mathcal{I}: |\mathcal{I}| \leq m} \frac{\E[T_\pi(\mathcal{I})]}{\E[T_{\opt}(\mathcal{I})]} \leq c(m).
\]

Thus, the competitive ratio of an algorithm (possibly a function of $m$), is the worst case ratio over all input instances of length at most $m$ of the sojourn time achieved by $\pi$ and the optimal sojourn time on that instance. 
Observe that the definition of the competitive ratio is rather strict, in that even if an algorithm is close to optimal on all but one input instance, its competitive ratio will be lower bounded by its performance on the bad input instance. 

For this purpose it is useful to consider randomized algorithms. A blind randomized algorithm $\tilde{\pi}$ can toss coins internally and base its decisions on this outcome of these internal random variables. Such an algorithm can thus be viewed as a probability distribution over blind deterministic algorithms $\pi_i$ \citep{borodin1998online}.
It then follows that the average sojourn time of instance $\mathcal{I}$ under a randomized algorithm $\tilde{\pi}$ equals $\E[T_{\tilde{\pi}}(\mathcal{I})]=\E_i[\E[T_{\pi_i}(\mathcal{I})]]$, where the outer expectation is over the internal random choices of the algorithm. We say that  
$\tilde{\pi}$ has competitive ratio $c(m)$ if
\begin{equation}
 \sup_{\mathcal{I}: |\mathcal{I}| \leq m} \frac{\E[T_{\tilde{\pi}}(\mathcal{I})]}{\E[T_{\opt}(\mathcal{I})]} 
 = \sup_{\mathcal{I}: |\mathcal{I}| \leq m} \frac{\E_i[\E[T_{\pi_i}(\mathcal{I})]]}{\E[T_{\opt}(\mathcal{I})]} 
 \leq c(m).
\end{equation}
Observe that the expectation is only over the random choices made by the algorithm, and the competitive ratio is still determined by the worst possible instance.
However, the competitive ratio of a blind randomized algorithm can be substantially lower, e.g.\ in situations where no single blind deterministic algorithm is good for all instances, but a suitable combination of algorithms is close to optimal for all instances.

\subsection{Randomized Multilevel Feedback algorithm}
\label{subsec:rmlf}
This section introduces \citet{kalyanasundaram2003minimizing}'s Randomized Multilevel Feedback ($\rmlf$) algorithm. As the name suggests, it is a randomized version of the Multilevel Feedback ($\mlf$) algorithm proposed by \citet{corbato1962experimental}. Both algorithms are blind and can therefore only learn the size of a job upon completion.

The general idea of both $\mlf$ and $\rmlf$ is to prioritize potential short jobs (e.g.\ jobs that have not received much service) and reduce the priority of a job as it receives more service. This prioritisation is embodied by assigning every job $J_j$ to a virtual high priority queue $Q_i$, and move it to a lower priority queue $Q_{i+1}$ once it has received $U_{i,j}$ units of service. The performance of the algorithm may suffer from a poor choice of the so-called targets $U_{i,j}$; in particular, if the job sizes are slightly above the targets, then jobs are moved to lower priority queues just prior to completion. The improvement of $\rmlf$ over $\mlf$ is due to randomization of the targets, thereby reducing the possibility of such events over general instances.

We now provide a mathematical representation of the $\rmlf$ algorithm. Assume first that there is a universal lower bound of on the job sizes in every instance $\mathcal{I}$, say with value $2$. For every instance of size $m$, the $j$-th job $J_j$ is released at time $r_j$ and has size $B_j$. The process $w_j(t)$ denotes the amount of time that $\rmlf$ has run $J_j$ before time $t$. For some symbolic constant $\theta$, fixed at $\theta:=12$, we define the independent exponentially distributed variables $\beta_j$ with $\P(\beta_j\leq x) = 1-\exp[-\theta x \ln j]$. Finally, the targets are defined as $U_{i,j} = 2^i \max\{1,2-\beta_j\}$ for all $i\in\{1,2,\ldots\}, j\in\{1,\ldots,m\}$. $\rmlf$ is then formalized in Figure~\ref{fig:rmlf}, identical to \citet{kalyanasundaram2003minimizing}.


\begin{figure}
 \begin{center}
 \fbox{
\begin{minipage}{.85\textwidth}
\small{\textbf{Algorithm $\rmlf$:} At all times the collection of released, but uncompleted, jobs are partitioned into queues, $Q_0,Q_1,\ldots$ We say that $Q_i$ is lower than $Q_j$ for $i<j$. [For] each job $J_j\in Q_i, U_{i,j}\in [2^i,2^{i+1}]$ when it entered $Q_i$. $\rmlf$ maintains the invariant that it is always running the job at the front of the lowest non-empty queue.
\\
When a job $J_h$ is released at time $r_h$, $\rmlf$ takes the following actions:
\begin{itemize}
\item Job $J_h$ is enqueued on $Q_0$.
\item The target $U_{0,h}$ is set to $\max\{1,2-\beta_h\}$.
\item If, just prior to $r_h$, it was the case that $Q_0$ was empty, and that $\rmlf$ was running a job $J_j$, $\rmlf$ then takes the following actions:
 \begin{itemize}
 \item Job $J_j$ is pre-empted. Note that $J_j$ remains at the front of its queue.
 \item $\rmlf$ begins running $J_h$.
 \end{itemize}
\end{itemize}
If at some time $t$, a job $J_j\in Q_{i-1}$ is being run when $w_j(t)$ becomes equal to $U_{i-1,j}$, then $\rmlf$ takes the following actions:
\begin{itemize}
\item Job $J_j$ is dequeued from $Q_{i-1}$.
\item Job $J_j$ is enqueued on $Q_i$.
\item The target $U_{i,j}$ is set to $2U_{i-1,j} = 2^i\max\{1,2-\beta_j\}$.
\end{itemize}
Whenever a job is completed, it is removed from its queue.}
\end{minipage}
 }
 \end{center}
\caption{Formal statement of $\rmlf$ algorithm.}\label{fig:rmlf}
\end{figure}

\citet{kalyanasundaram2003minimizing} proved that the $\rmlf$ algorithm is $\O(\log(m)\log(\log(m)))$- competitive. This result was later strengthened by \citet{becchetti2004nonclairvoyant} to a competitive ratio of $\O(\log(m))$:
\begin{theorem}
\label{thm:rmlf}
 The $\rmlf$ algorithm is $\log(m)$-competitive. That is,
 \begin{equation}
   \E[T_\rmlf(\mathcal{I})] \leq C_1 \log(m) \E[T_\srpt(\mathcal{I})]
 \end{equation}
 for all instances $\mathcal{I}$ of size at most $m$ and a universal constant $C_1$.
\end{theorem}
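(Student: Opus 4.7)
The plan is to follow the charging-scheme approach of Becchetti and Leonardi. Writing $N_\pi(t)$ for the number of unfinished jobs under policy $\pi$ at time $t$, Little's law gives $|\mathcal I|\cdot\E[T_\pi(\mathcal I)] = \int_0^\infty N_\pi(t)\dd t$, so the theorem reduces to an integrated bound
\begin{equation*}
\E\!\left[\int_0^\infty N_\rmlf(t)\dd t\right] \leq C_1 \log(m) \int_0^\infty N_\srpt(t)\dd t.
\end{equation*}
I would decompose $N_\rmlf(t) = \sum_{k\geq 0} n_k(t)$, where $n_k(t)$ denotes the population of queue $Q_k$ at time $t$. Because any job residing in $Q_k$ has received at least $2^{k-1}$ units of service under $\rmlf$, only the levels $0 \leq k \leq 1+\log_2 B_{\max}$ are ever non-empty, where $B_{\max}$ is the largest job size in $\mathcal I$.

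The technical heart of the proof is a level-wise domination estimate of the form
\begin{equation*}
\E\!\left[\int_0^\infty n_k(t)\dd t\right] \;\leq\; c \int_0^\infty \hat N_\opt^{(k)}(t)\dd t,
\end{equation*}
where $\hat N_\opt^{(k)}(t)$ counts jobs alive in the $\srpt$ schedule at time $t$ whose original size is at least $2^{k-2}$, and $c$ is an absolute constant. The estimate is obtained by charging, at each time $t$, every job currently residing in $Q_k$ under $\rmlf$ to an $\srpt$ job of size $\geq 2^{k-2}$ that is still alive at $t$, arranged so that each $\srpt$ job is charged only a constant number of times in expectation. The randomization of the targets $U_{k,j}= 2^k \max\{1,2-\beta_j\}$ is crucial here: it ensures that a job of true size slightly above $2^{k-1}$ is, with probability bounded away from zero over $\beta_j$, ``captured'' at level $k-1$ rather than graduating to $Q_k$, so the independence of the $\beta_j$'s geometrically damps the cascading of jobs into higher queues. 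Summing over $k$ and integrating over $t$, then exploiting $B_{\max} \leq \sum_j B_j \leq \sum_j T_\srpt(j)$ to convert $\log_2 B_{\max}$ into $O(\log m)$ after absorbing constants into $C_1$, yields the claimed inequality.

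The main obstacle is precisely the level-wise estimate above. The difficulty is that $n_k(t)$ depends on the entire past trajectory of $\rmlf$, which in turn depends on all $\beta_j$'s for jobs that have already arrived; a naive union bound over levels and jobs would give only the $O(\log(m)\log\log(m))$ factor originally obtained by Kalyanasundaram and Pruhs. Sharpening this to $O(\log m)$ requires a refined conditioning and coupling argument in which one first fixes the arrival sequence and true job sizes, then carefully exploits the memorylessness of the exponential $\beta_j$'s to decouple the queue-level dynamics across distinct jobs. The surrounding steps — Little's law, the queue-level decomposition, and the $\log B_{\max} \to \log m$ conversion — are by comparison routine.
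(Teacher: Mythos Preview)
The paper does not prove Theorem~\ref{thm:rmlf}; it is quoted from \citet{becchetti2004nonclairvoyant} and used as a black box. Your sketch follows the spirit of their charging approach, but the step that is supposed to produce the $\log m$ factor does not work as written.

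Summing your level-wise estimate over $k$ gives, on the right-hand side,
\[
c\sum_{k} \int_0^\infty \hat N_\opt^{(k)}(t)\,\dd t
\;=\; c\sum_j \bigl|\{k: B_j\geq 2^{k-2}\}\bigr|\; T_{\srpt}(j)
\;=\; \O(\log B_{\max})\cdot \sum_j T_{\srpt}(j),
\]
since a job of size $B_j$ is counted in $\hat N_\opt^{(k)}$ for $\O(\log B_j)$ values of $k$. Your proposed conversion, $B_{\max}\leq \sum_j B_j\leq \sum_j T_{\srpt}(j)$, only yields $\log B_{\max}\leq \log\bigl(\sum_j T_{\srpt}(j)\bigr)$, which is the wrong shape: you need a multiplicative $\log m$ in front of $\sum_j T_{\srpt}(j)$, not the logarithm of that sum. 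For a concrete failure, take $m=2$ jobs of equal size $2^L$ released at time~$0$; your bound is $\Theta(L)\cdot\E[T_\opt]$ whereas the theorem asserts $\O(1)\cdot\E[T_\opt]$, and $L$ is unrelated to $m$.

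In Becchetti and Leonardi's actual argument the $\log m$ factor does not come from counting queue levels; the number of levels is $\Theta(\log(B_{\max}/B_{\min}))$, which is incomparable with $m$. The $\log m$ enters instead through the rate parameter $\theta\ln j$ in the distribution of $\beta_j$, via a finer amortized accounting of how often and how far each job overshoots its natural level. Your outline invokes the $\beta_j$'s only to justify the constant in the per-level estimate and then attempts to extract $\log m$ from the level count; that route cannot distinguish $\log m$ from $\log B_{\max}$ and therefore cannot close the gap between the $\O(\log m\log\log m)$ bound of \citet{kalyanasundaram2003minimizing} and the $\O(\log m)$ bound you are aiming for.
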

The competitive ratio lower bound of $\Omega(\log(m))$ as shown by \citet{motwani1994nonclairvoyant} implies that, up to multiplicative factors, this is the best bound possible for randomized algorithms in the current model. Note that this competitive ratio is significantly lower than the best possible ratio for blind deterministic algorithms: $\Omega(m^{1/3})$.

In the next section we propose a variant on $\rmlf$ that makes the assumption of a universal lower bound on job sizes obsolete.

\subsection{Extending the RMLF algorithm}
\label{subsec:dirmlf}
In a general $\gg$ queue there may not be a strictly positive lower bound on the job sizes. The $\rmlf$ algorithm is not directly applicable in that case. This problem is solved in an extension of the $\rmlf$ algorithm, which we will refer to as the $\dirmlf$ algorithm. The $\dirmlf$ algorithm defines queues $\tilde{Q}_1,\tilde{Q}_2,\ldots$ that are identical to the queues $Q_1,Q_2,\ldots$ of the $\rmlf$ algorithm, but splits the first queue $Q_0$ into many queues $\tilde{Q}_0,\tilde{Q}_{-1},\ldots$ Additionally, it considers a ``new job'' queue $\tilde{Q}^*$. The concept of the $\dirmlf$ algorithm is described below; the formal statement is presented in the appendix. 

Let a problem instance $\tilde{\mathcal{I}}$ for $\dirmlf$ be given. A target $\tilde{U}_{*,j}=2^{z^*_j} \max\{1,2-\tilde{\beta}_j\}$ is assigned to every job $\tilde{J}_j$ upon arrival, where $\tilde{\beta}_j$ is an exponentially distributed random variable and $z^*_j \in\Z$ depends on the current state of the system. When the target has been assigned to the new job, it receives service in $\tilde{Q}^*$ until either the job is completed, the obtained service equals the target, or a new job arrives. Once either of the latter two events happens, the job in $\tilde{Q}^*$ is assigned to a queue $\tilde{Q}_z, z\in \Z$.

If there are no jobs in queue $\tilde{Q}^*$, the $\dirmlf$ algorithm serves the queues $\tilde{Q}_z$ in a similar fashion as the $\rmlf$ algorithm. Moreover, at any time the problem instance $\tilde{\mathcal{I}}$ can be converted to a problem instance $\mathcal{I}$ for $\rmlf$ by a scaling argument, and under this scaling the sojourn times of all jobs are identical for both algorithms. From this perspective, it is only natural that $\dirmlf$ inherits the competitive ratio of $\rmlf$:

\begin{theorem}
\label{thm:dirmlf}
 The $\dirmlf$ algorithm is $\log(m)$-competitive. That is,
 \begin{equation}
   \E[T_\dirmlf(\mathcal{I})] \leq C_1 \log(m) \E[T_\srpt(\mathcal{I})]
   \label{eq:dirmlf}
 \end{equation}
 for all instances $\mathcal{I}$ of size at most $m$ for a universal constant $C_1$. This constant is identical to the constant $C_1$ in Theorem~\ref{thm:rmlf}.
\end{theorem}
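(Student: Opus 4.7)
The plan is to exploit the scaling argument mentioned just before the theorem: embed any finite instance $\tilde{\mathcal{I}}$ for $\dirmlf$ into an instance $\mathcal{I}$ for $\rmlf$ via a uniform rescaling of work and time, and then invoke Theorem~\ref{thm:rmlf}. Concretely, given a finite instance $\tilde{\mathcal{I}}$ of size at most $m$, let $s_{\min} := \min_j \tilde{B}_j > 0$ and choose a power of two $c$ with $s_{\min}/c \geq 2$ (for instance $c := 2^{\lfloor \log_2 (s_{\min}/2) \rfloor}$). Define $\mathcal{I}$ to have job sizes $B_j := \tilde{B}_j/c$ and arrival times $r_j := \tilde{r}_j/c$. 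By construction every $B_j \geq 2$, so the hypothesis of Theorem~\ref{thm:rmlf} is met for $\mathcal{I}$.

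Next, I would show by induction on the ordered sequence of events (arrivals, queue transitions, completions) that, under the coupling $\tilde{\beta}_j = \beta_j$, running $\dirmlf$ on $\tilde{\mathcal{I}}$ and running $\rmlf$ on $\mathcal{I}$ produce scheduling decisions that are identical up to the time scaling: an event occurring at time $t$ in the $\dirmlf$ schedule corresponds to the same event at time $t/c$ in the $\rmlf$ schedule. The key observation is that $\dirmlf$'s dynamic starting level $z^*_j$ and its targets $\tilde{U}_{i,j} = 2^i \max\{1,2-\tilde{\beta}_j\}$ are designed so that, after dividing all quantities of work by $c$, the $\dirmlf$ queue $\tilde{Q}_z$ corresponds to the $\rmlf$ queue $Q_{z - \log_2 c}$, and the first target $\tilde{U}_{z^*_j,j}$ rescales precisely to $U_{0,j} = \max\{1,2-\beta_j\}$. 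The same rescaling sends $\srpt$ on $\tilde{\mathcal{I}}$ to $\srpt$ on $\mathcal{I}$, because $\srpt$ uses only the order of remaining sizes, which is invariant under uniform scaling, while time intervals stretch by $c$. Consequently, per-job sojourn times obey $T_{\dirmlf}(\tilde{J}_j) = c\, T_{\rmlf}(J_j)$ and $T_{\srpt}(\tilde{\mathcal{I}}) = c\, T_{\srpt}(\mathcal{I})$.

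Combining these identities with Theorem~\ref{thm:rmlf} applied to $\mathcal{I}$ immediately yields
\[
\E[T_{\dirmlf}(\tilde{\mathcal{I}})] \;=\; c\,\E[T_{\rmlf}(\mathcal{I})] \;\leq\; c \cdot C_1 \log(m)\,\E[T_{\srpt}(\mathcal{I})] \;=\; C_1 \log(m)\,\E[T_{\srpt}(\tilde{\mathcal{I}})],
\]
with exactly the same universal constant $C_1$ as in Theorem~\ref{thm:rmlf}, which is the claim of the theorem.

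The hard part is the bookkeeping underlying the equivalence of scheduling decisions. The subtle point is that late-arriving small jobs may force $\dirmlf$ to use queues $\tilde{Q}_z$ at lower indices than any queue used before that arrival, whereas $\rmlf$ always places newly arriving jobs at $Q_0$. Checking that the dynamically chosen $z^*_j$ is consistent with a single global rescaling $c$ throughout the evolution requires a careful reading of the formal $\dirmlf$ specification in the appendix. A natural way to avoid this difficulty is to apply the scaling busy-period by busy-period, so that $c$ depends only on the minimum job size within the current busy period; since $\rmlf$ and $\srpt$ are both work-conserving and non-learning, this localisation does not affect their sojourn times, and it side-steps the interaction between late small arrivals and earlier assignments.
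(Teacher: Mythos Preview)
Your overall strategy is exactly the paper's: pick a single global power-of-two scale $c=2^g$ so that the minimum job size becomes at least $2$, couple $\tilde\beta_j\equiv\beta_j$, and transfer the bound from Theorem~\ref{thm:rmlf}. The paper does precisely this, with $g=\lfloor\log_2\tilde B_{\min}\rfloor-1$, and no busy-period-by-busy-period localisation is needed.

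One point needs sharpening. Your claim that the two schedules are \emph{event-by-event} identical up to scaling is slightly too strong, and the assertion that the first $\dirmlf$ target rescales to $U_{0,j}$ is not correct in general: $z^*_j$ is state-dependent and typically differs from $g$. Inside the aggregate $\hat Q:=\tilde Q^*\cup\bigcup_{z\le g}\tilde Q_z$, $\dirmlf$ gives the newest arrival immediate priority via $\tilde Q^*$, whereas $\rmlf$ serves $Q_0$ in FIFO order; so at intermediate times the job being processed can genuinely differ. What the paper actually proves is weaker but sufficient: $\hat Q$ and $Q_0$ have the same busy-period structure, because both see a dedicated unit-speed server, the work needed to push job $j$ out is $\tilde U_{g,j}=2^gU_{0,j}$, and arrival times scale by $2^g$; hence $\hat Q(t)>0\Leftrightarrow Q_0(2^{-g}t)>0$. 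Crucially, since every $\tilde B_j\ge 2^{g+1}>\tilde U_{g,j}$, \emph{no job completes inside} $\hat Q$, and both algorithms preserve arrival order, so jobs graduate to $\tilde Q_{g+1}$ (resp.\ $Q_1$) in the same order and at the same scaled times. From level $g+1$ upward the schedules then coincide exactly, and completion times---hence sojourn times---match. With this refinement your argument is the paper's argument; the worry about late small arrivals evaporates once you note that the global minimum ensures nothing finishes in $\hat Q$.
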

The proof of Theorem~\ref{thm:dirmlf} is given in the appendix. 




\section{Main result}
\label{sec:main}
We are now ready to present the main result, Theorem~\ref{thm:main}. The main result states that the average sojourn time under $\srpt$ is at most a factor $\log (1/(1-\r^{(n)}))$ better than that under $\dirmlf$ in heavy traffic:
\begin{theorem}
\label{thm:main}
For a $\gg$ queue, the $\dirmlf$ algorithm satisfies the relation
\begin{equation}
 \E[T^{(n)}_\dirmlf] = \O \left( \log \left( \frac{1}{1-\r^{(n)}} \right) \E[T^{(n)}_{\srpt}] \right)
 \label{eq:main}
\end{equation}
as $\ntoinfty$, provided that $\sup_{n\in\{1,2,\ldots\}} \E[(B^{(n)})^\a]<\infty$ for some $\a>2$. 
\end{theorem}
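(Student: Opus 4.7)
The plan is to exploit the regenerative structure of the $\gg$ queue and combine it with the pathwise competitive bound of Theorem~\ref{thm:dirmlf}. Since both $\dirmlf$ and $\srpt$ are work-conserving, the total work process is the same under either policy, and hence the busy periods coincide on every sample path; in particular the number $N^{(n)}$ of jobs in a generic busy period and its duration $P^{(n)}$ do not depend on the scheduling rule. Writing $S^{(n)}_\pi$ for the sum of sojourn times of the jobs belonging to one busy period under policy $\pi$, renewal--reward yields
\begin{equation*}
 \E[T^{(n)}_\pi] = \frac{\E[S^{(n)}_\pi]}{\E[N^{(n)}]},
\end{equation*}
so the theorem reduces to establishing $\E[S^{(n)}_\dirmlf] = \O(\log(1/(1-\r^{(n)})) \E[S^{(n)}_\srpt])$. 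Viewing the jobs of a busy period as a random instance $\mathcal{I}$ of size $N^{(n)}$ and conditioning on $\mathcal{I}$, Theorem~\ref{thm:dirmlf} delivers the pointwise inequality $\E[S^{(n)}_\dirmlf \mid \mathcal{I}] \leq C_1 \log(N^{(n)}) S^{(n)}_\srpt(\mathcal{I})$.

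I would then introduce a cut-off $M_n = (1-\r^{(n)})^{-p}$ for a sufficiently large constant $p>0$ and split according to whether the busy period is "typical" ($N^{(n)} \leq M_n$) or "atypical" ($N^{(n)}>M_n$). On the typical event the pointwise bound gives
\begin{equation*}
 \E[S^{(n)}_\dirmlf \ind(N^{(n)} \leq M_n)] \leq C_1 \log(M_n) \E[S^{(n)}_\srpt] = \O\bigl(\log(1/(1-\r^{(n)}))\bigr) \E[S^{(n)}_\srpt],
\end{equation*}
since $\log M_n = p \log(1/(1-\r^{(n)}))$; this already has the right order. For the atypical event the factor $\log N^{(n)}$ is too large, so I would fall back on the crude pathwise estimate $S^{(n)}_\dirmlf \leq N^{(n)} P^{(n)}$ (every job spends at most the busy-period duration in the system) and control the tail by combining Cauchy--Schwarz with Markov:
\begin{equation*}
 \E[N^{(n)} P^{(n)} \ind(N^{(n)} > M_n)] \leq \bigl(\E[(N^{(n)})^2]\E[(P^{(n)})^2]\bigr)^{1/2} \bigl(\E[(N^{(n)})^\kappa]\bigr)^{1/2} M_n^{-\kappa/2}.
\end{equation*}

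The inputs that make this work are exactly the heavy-traffic moment asymptotics $\E[(N^{(n)})^\kappa], \E[(P^{(n)})^\kappa] = \O((1-\r^{(n)})^{1-2\kappa})$ that the paper announces and subsequently proves in Section~\ref{sec:moments}. Plugging them in, and choosing $p$ large with $\kappa$ within the range where the moment asymptotic is valid, makes the atypical contribution $\O((1-\r^{(n)})^{-1+\e})$ for some $\e>0$, which is $o(\log(1/(1-\r^{(n)})) \E[S^{(n)}_\srpt])$ once we compare against the trivial lower bound $\E[T^{(n)}_\srpt] \geq \E[B^{(n)}] \geq \r_0 A_{\min}$; hence it is dominated by the typical-period contribution and the two together give \eqref{eq:main} after dividing by $\E[N^{(n)}]$. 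The main obstacle is the joint calibration of $p$ and $\kappa$ against the moment hypothesis $\sup_n \E[(B^{(n)})^\a] < \infty$ with $\a>2$: the condition $\a>2$ is precisely what ensures that $\E[(N^{(n)})^2]$ and $\E[(P^{(n)})^2]$ behave as announced, and slightly higher moments of $B^{(n)}$ are what afford a Markov inequality with a large enough power to annihilate the tail. Apart from this bookkeeping, the argument is a clean marriage of the pathwise competitive bound from Section~\ref{sec:competitive} with the regenerative moment estimates of Section~\ref{sec:moments}.
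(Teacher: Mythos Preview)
Your strategy is essentially the paper's: regenerative decomposition into busy periods, a threshold $M_n=(1-\r^{(n)})^{-p}$ separating ``small'' from ``large'' busy periods, the competitive bound of Theorem~\ref{thm:dirmlf} on the small periods, the crude bound $S^{(n)}_\dirmlf\le N^{(n)}P^{(n)}$ on the large periods, and the moment estimates of Propositions~\ref{prop:Pkappa} and~\ref{prop:Nkappa} to kill the tail. That is exactly how Section~\ref{sec:proofmain} proceeds.

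However, your displayed inequality for the large-period term is not correct. Writing it as
\[
 \E\bigl[N^{(n)}P^{(n)}\ind(N^{(n)}>M_n)\bigr]\le\bigl(\E[(N^{(n)})^2]\E[(P^{(n)})^2]\bigr)^{1/2}\bigl(\E[(N^{(n)})^\kappa]\bigr)^{1/2}M_n^{-\kappa/2}
\]
implicitly requires $\E[(N^{(n)}P^{(n)})^2]\le\E[(N^{(n)})^2]\E[(P^{(n)})^2]$ after the first Cauchy--Schwarz step, and this is false: $N^{(n)}$ and $P^{(n)}$ are strongly positively dependent (indeed, for $N=P$ one has $\E[N^4]\ge(\E[N^2])^2$ by Jensen, with equality only in degenerate cases). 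So the decoupling step, as written, does not go through.

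The paper fixes this with two H{\"o}lder applications rather than Cauchy--Schwarz: first with exponents $s/(s-1)$ and $s$ to separate $P^{(n)}$ from $N^{(n)}\ind(N^{(n)}>M_n)$, then with exponents $2/s$ and $2/(2-s)$ to separate $(N^{(n)})^s$ from the indicator, and finally Markov on $\P(N^{(n)}>M_n)$. The parameter $s$ is chosen in $(\a/(\a-1),2)$ so that $s/(s-1)\in(2,\a)$; this is the ``joint calibration'' you allude to, and it is the place where the hypothesis $\a>2$ enters sharply, since only moments of $P^{(n)}$ and $N^{(n)}$ up to order $\a$ are available from Propositions~\ref{prop:Pkappa} and~\ref{prop:Nkappa}. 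Your version, once the inequality is repaired (for instance via $\E[NP\ind(N>M)]\le\E[P^2]^{1/2}\E[N^2\ind(N>M)]^{1/2}$ followed by $\E[N^2\ind(N>M)]\le M^{-(\kappa-2)}\E[N^\kappa]$ with $\kappa\in(2,\a]$), would also work and leads to the same conclusion after choosing $p>2\kappa/(\kappa-2)$; but the bookkeeping you flagged as ``the main obstacle'' is not optional and has to be carried out explicitly.
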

The proof of the theorem is postponed until the next section. It relies on techniques from both competitive analysis and applied probability.

As a consequence of Theorem~\ref{thm:main}, the blind policy $\A^{(n)}$ that minimizes the average sojourn time for the $n$-th system also satisfies the above performance bound. We emphasise the fact that the implementation of $\dirmlf$ does not depend on the interarrival and job size distributions, whereas this may not be true for the optimal blind policy $\A^{(n)}$. This property may pose a considerable advantage over a system-dependent optimal blind policy with similar mean performance, for example when the input distributions are only approximately known. Also, we note that Theorem~\ref{thm:main} remains true if $\dirmlf$ is replaced by $\rmlf$, provided that the support of the job size distribution $F^{(n)}_B$ is uniformly bounded away from zero (i.e.\ $B_i^{(n)}\geq B_{\min}$ for some $B_{\min}>0$ independent of $i$ and $n$).

We conclude this section with some remarks:

\remark{
Recall that the average sojourn time under any blind policy in an $\mm$ queue is $\E[B^{(n)}]/(1-\r^{(n)})$, whereas the average sojourn time under $\srpt$ \citep{bansal2001analysis} is
\begin{equation}
 \E[T_\srpt^{(n)}] = (1+o(1)) \frac{1}{\log\left( \frac{e}{1-\rho^{(n)}}\right)} \frac{\E[B^{(n)}]}{1-\rho^{(n)}}.
\end{equation}
In this case, our result is tight up to a multiplicative factor.
}

\remark{
There may be sequences of $\gg$ queues for which $\E[T_\dirmlf^{(n)}]$ has a worse heavy traffic scaling than $\E[T_{\mathcal{A}^{(n)}}^{(n)}]$. For example, it is known that the $\fb$ policy minimizes the average sojourn time over all blind policies in a $\mg$ queue if $F^{(n)}_B$ has a decreasing failure rate \citep{righter1990extremal}. Moreover, if $F^{(n)}_B(x)=1-x^{-\b}, x\geq 1, \b\in(1,\infty)\slash \{2\},$ then $\E[T_\fb^{(n)}] = \Theta(\E[T_\srpt^{(n)}])$ displays the best possible scaling in heavy traffic \citep{nuyens2008foreground, lin2011heavy}. The heavy traffic behaviour of $\E[T_\dirmlf^{(n)}]$ is unknown for any $\gg$ queue and could scale worse than $\E[T_\fb^{(n)}]$ (although no worse than $\log(1/(1-\r))\E[T_\fb^{(n)}]$ by Theorem~\ref{thm:main}).

On the other hand, the optimal blind policy $\mathcal{A}^{(n)}$ may not be robust under different input distributions $F^{(n)}_A$ and $F^{(n)}_B$. Continuing the $\fb$ example, we see that it is optimal if $F^{(n)}_B$ is the Pareto distribution, yet $\E[T_\fb^{(n)}]=\Theta((1-\r)^{-2})=\Theta((1-\r)^{-1})\E[T_\srpt^{(n)}]$ if $F^{(n)}_B=\ind(x\leq 1)$ is deterministic \citep{nuyens2008foreground, lin2011heavy}.
}

\section{Proof of the main theorem}
\label{sec:proofmain}
The current section presents the proof of Theorem~\ref{thm:main}.

\subsection{Proof strategy.}
The competitive ratio of the $\dirmlf$ algorithm provides an upper bound on the suboptimality of $\dirmlf$. 
Specifically, it guarantees an upper bound of $\O(\log(m))$ on the ratio of the average sojourn time under $\dirmlf$ over the average sojourn time under $\srpt$, for instances of length at most $m$. Unfortunately, a general $\gg$ queue corresponds to an infinite-length problem instance and hence the competitive ratio result can not be applied directly.

The key idea of the proof is that a $\gg$ queue is a regenerative process, and as such one would like to analyse individual busy periods rather than the infinite problem instance. This approach is justified by the fact that for a single server, any two work-conserving scheduling policies $\pi_1$ and $\pi_2$ generate the same busy periods, i.e.\ $V_{\pi_1}^{(n)}(t) \equiv V_{\pi_2}^{(n)}(t)$. This means that the server is simultaneously active under both policies, and hence in particular that every busy period under $\pi_1$ can be compared to the same busy period under $\pi_2$.

Still, regarding every busy period as an individual problem instance does not bound the problem instance length. One way to circumvent the unbounded problem instances is by discriminating between ``small'' busy periods with at most $N^{(n)}_0$ jobs, and ``large'' busy periods. Busy periods with at most $N^{(n)}_0$ jobs can be analysed with the competitive ratio, yielding a bound of $\O(\log (N^{(n)}_0))$. This leaves us with the analysis of large busy periods.

Since the $\gg$ queue induces a distribution over problem instances, the probability of experiencing busy periods with at least $N^{(n)}_0$ jobs can be made arbitrarily small by choosing the threshold $N^{(n)}_0$ properly. The combined sojourn time of all the jobs in such a large busy period is dominated by the product of the number of jobs $N^{(n)}$ in the busy period and the duration $P^{(n)}$ of the busy period. Therefore, the contribution of large busy periods to the overall average sojourn time is at most $\E[N^{(n)}P^{(n)}\ind(N^{(n)}>N^{(n)}_0)]/\E[N^{(n)}]$. We will show that, for an appropriate choice of $N^{(n)}_0$, the contribution of the large busy periods is $o(\log( N^{(n)}_0))$.

The second part of this section formalizes the above strategy. In the analysis of the expectation $\E[N^{(n)}P^{(n)}\ind(N^{(n)}>N^{(n)}_0)]$ we greatly rely on H{\"o}lder's inequality for decoupling the given expectation into individual moments of $P^{(n)}$ and $N^{(n)}$. The behaviour of these moments is then the subject of Propositions~\ref{prop:Pkappa} and \ref{prop:Nkappa}, both of which are proven in Section~\ref{sec:moments}.

\subsection{Small and large busy periods}
We begin by specifying the threshold that distinguishes small and large busy periods based on the number of jobs. Fix $s\in\left(\frac{\a}{\a-1},2\right)$ and $\zeta > \frac{4+2s}{2-s}$. The threshold $N^{(n)}_0$ is now defined as $N^{(n)}_0:=(1-\r^{(n)})^{-\zeta}$.

Let $T^{(n)}_{\dirmlf,i}, T^{(n)}_{\srpt,i}, i\in\{1,\ldots,N^{(n)}\},$ be the sojourn time of job $i$ under algorithm $\dirmlf$ and $\srpt$, respectively. Using the fact that a $\gg$ queue is a regenerative process, we only need to consider a general busy period when analysing the average sojourn time \citep[Thm.~VI.1.2, Prop.~X.1.3]{asmussen2003applied}:
\begin{equation}
 \E[T^{(n)}_{\dirmlf}] = \frac{1}{\E[N^{(n)}]} \E\left[ \sum_{i=1}^{N^{(n)}} T^{(n)}_{\dirmlf, i} \right].
\end{equation}
Discriminating between small and large busy periods then yields
\begin{equation}
 \E[T^{(n)}_{\dirmlf}] = \frac{1}{\E[N^{(n)}]} \E\left[ \sum_{i=1}^{N^{(n)}} T^{(n)}_{\dirmlf, i} \ind(N^{(n)}\leq N^{(n)}_0) \right]
  + \frac{1}{\E[N^{(n)}]} \E\left[ \sum_{i=1}^{N^{(n)}} T^{(n)}_{\dirmlf, i} \ind(N^{(n)}> N^{(n)}_0) \right].
 \label{eq:busyperiods}
\end{equation}
As described in the strategy, we will bound the first term by means of the competitive ratio of $\dirmlf$ and show that the second term vanishes asymptotically as $n\rightarrow \infty$. These analyses are the subjects of the following two subsections.

\subsection{Small busy periods: competitive ratio}
The first term in \eqref{eq:busyperiods} considers busy periods with at most $N_0^{(n)}$ jobs. Theorem~\ref{thm:dirmlf} ensures that, for any problem instance $\mathcal{I}$ with $N^{(n)}\leq N_0^{(n)}$ jobs, the average sojourn time $\E[T_{\dirmlf}(\mathcal{I})]$ is bounded by $C_1 \log(N_0^{(n)}) \E[T_{\srpt}(\mathcal{I})]$. In particular
\begin{align*}
 \frac{1}{\E[N^{(n)}]} \E\left [\sum_{i=1}^{N^{(n)}} T^{(n)}_{\dirmlf,i} \ind(N^{(n)} \leq N^{(n)}_0) \right]
  &\leq \frac{C_1}{\E[N^{(n)}]} \log (N^{(n)}_0) \E\left[ \sum_{i=1}^{N^{(n)}} T^{(n)}_{\srpt,i} \ind(N^{(n)} \leq N^{(n)}_0) \right] \\
  &\leq \frac{C_1}{\E[N^{(n)}]} \log (N^{(n)}_0) \E\left[ \sum_{i=1}^{N^{(n)}} T^{(n)}_{\srpt,i} \right] \\
  &= C_1 \log (N^{(n)}_0) \E[T^{(n)}_{\srpt}].
\end{align*}
The proof is complete once we show that the second term in~\eqref{eq:busyperiods} is dominated by $\log(N_0^{(n)}) \E[T^{(n)}_{\srpt}]$ as $\ntoinfty$.

\subsection{Large busy periods: H{\"o}lders inequality}
For any work-conserving scheduling policy, the sojourn time of an individual job is bounded by the duration $P^{(n)}$ of the busy period. Therefore, the second term in \eqref{eq:busyperiods} is bounded by
\begin{equation}
 \frac{1}{\E[N^{(n)}]} \E\left[\sum_{i=1}^{N^{(n)}} T^{(n)}_{\dirmlf, i} \ind(N^{(n)}> N^{(n)}_0) \right] \leq \frac{1}{\E[N^{(n)}]} \E\left[ N^{(n)} P^{(n)} \ind(N^{(n)}> N^{(n)}_0) \right].
\end{equation}
The functionals $N^{(n)}$ and $P^{(n)}$ are dependent, which makes an exact analysis of the expectation troublesome. This complication is avoided by applying H{\"o}lder's inequality, which allows us to approximate the dependent expectation by the product of two expectations. In particular, for $\tilde{s}=\frac{s}{s-1}\in (2,\a)$ we have $\frac{1}{\tilde{s}}+\frac{1}{s}=1$ and hence
\begin{equation}
 \E\left[\sum_{i=1}^{N^{(n)}} T^{(n)}_{\dirmlf, i} \ind(N^{(n)}> N^{(n)}_0) \right] \leq \E[(P^{(n)})^{\frac{s}{s-1}}]^{\frac{s-1}{s}} \E[(N^{(n)})^s \ind(N^{(n)}>N^{(n)}_0)]^{\frac{1}{s}}.
\end{equation}
Applying H{\"o}lder's inequality once more with parameters $\frac{2}{s}$ and $\frac{2}{2-s}$, we get
\begin{equation}
 \E\left[\sum_{i=1}^{N^{(n)}} T^{(n)}_{\dirmlf, i} \ind(N^{(n)}> N^{(n)}_0) \right] \leq \E[(P^{(n)})^\frac{s}{s-1}]^\frac{s-1}{s} \E[(N^{(n)})^2]^\frac{1}{2} \P(N^{(n)}>N^{(n)}_0)^\frac{2-s}{2s}.
\end{equation}
Finally, the tail probability of $N^{(n)}$ is bounded by Markov's inequality. We therefore obtain the following upper bound for the second term in \eqref{eq:busyperiods}:
\begin{equation}
 \frac{1}{\E[N^{(n)}]} \E\left[\sum_{i=1}^{N^{(n)}} T^{(n)}_{\dirmlf, i} \ind(N^{(n)}> N^{(n)}_0) \right] \leq \E[(P^{(n)})^\frac{s}{s-1}]^\frac{s-1}{s} \E[(N^{(n)})^2]^\frac{1}{2} \frac{\E[N^{(n)}]^{\frac{2-s}{2s}-1}}{(N^{(n)}_0)^\frac{2-s}{2s}}.
\end{equation}

The analysis of the average sojourn time for large busy periods is now reduced to the analysis of moments of $N^{(n)}$ and $P^{(n)}$. The following two propositions quantify the behaviour of these moments.
\begin{proposition}
\label{prop:Pkappa}
Assume $\sup_{n\in\{1,2,\ldots\}} \E[(B^{(n)})^\alpha]<\infty$ for some $\a\geq 2$. Then
\begin{equation}
 \E[(P^{(n)})^\kappa] = \O\left((1-\r^{(n)})^{1-2\kappa} \right).
 \label{eq:Pkappa}
\end{equation}
for all $\kappa \in [1,\alpha]$. Moreover, $\E[P^{(n)}] = \Theta \left((1-\r^{(n)})^{-1}\right)$.
\end{proposition}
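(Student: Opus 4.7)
The plan is to reduce the bound on $\E[(P^{(n)})^\kappa]$ to a moment bound on the number of jobs per busy period $N^{(n)}$, and then to prove the latter via ladder-height theory for random walks with vanishing negative drift---so in practice I would first prove Proposition~\ref{prop:Nkappa} and then invoke it here. The reduction starts from the identity $P^{(n)} = \sum_{i=1}^{N^{(n)}} B_i^{(n)}$, which holds because a work-conserving server processes precisely the jobs arriving during a busy period. Writing $B_i^{(n)} = \E[B^{(n)}] + (B_i^{(n)} - \E[B^{(n)}])$ and applying $(a+b)^\kappa \leq 2^{\kappa-1}(a^\kappa + b^\kappa)$ yields
\[
\E[(P^{(n)})^\kappa] \leq 2^{\kappa-1}\E[B^{(n)}]^\kappa\,\E[(N^{(n)})^\kappa] + 2^{\kappa-1}\E[|M_{N^{(n)}}|^\kappa],
\]
where $M_k := \sum_{i\leq k}(B_i^{(n)} - \E[B^{(n)}])$ is a zero-mean martingale in the natural filtration and $N^{(n)}$ is a stopping time for it. The Burkholder-Davis-Gundy inequality for stopped sums of i.i.d.\ centred variables gives $\E[|M_{N^{(n)}}|^\kappa] \leq C_\kappa\,\E[(N^{(n)})^{\kappa/2}]\,\E[|B^{(n)}-\E[B^{(n)}]|^\kappa]$. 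Under the uniform moment hypothesis $\sup_n\E[(B^{(n)})^\alpha]<\infty$ (which applies since $\kappa\leq\alpha$), this term is of strictly lower order than $\E[(N^{(n)})^\kappa]$ as $\rho^{(n)} \to 1$, so the stated bound $\E[(P^{(n)})^\kappa] = O((1-\rho^{(n)})^{1-2\kappa})$ follows once $\E[(N^{(n)})^\kappa]=O((1-\rho^{(n)})^{1-2\kappa})$ is established.

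The main technical step is therefore the moment bound on $N^{(n)}$. Observe that $N^{(n)}$ is the first passage time to $(-\infty,0]$ of the random walk $S_k := \sum_{i\leq k}(B_i^{(n)} - A_i^{(n)})$ with negative drift $-\mu^{(n)} = -(1-\rho^{(n)})\E[A^{(n)}]$, and $\mu^{(n)} \downarrow 0$. The Brownian heuristic---a Brownian motion with drift $-\mu$ starting at a positive level of order one has hitting time of $0$ whose $\kappa$-th moment is of order $\mu^{1-2\kappa}$---identifies the desired scaling. To transfer this estimate to the discrete-time walks uniformly in $n$, I would combine the Wiener-Hopf ladder-height representations in \citet{asmussen2003applied} expressing moments of $N^{(n)}$ through moments of the ascending ladder height and the probability that an ascent occurs, with the uniform tail and moment inequalities on the ascending ladder height of small-drift random walks proved by \citet{lotov2002inequalities}; the uniform moment hypothesis on $B^{(n)}$ provides the control those inequalities require.

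For the sharpening $\E[P^{(n)}] = \Theta((1-\rho^{(n)})^{-1})$, the lower bound uses the standing hypothesis $\P(B^{(n)} - A^{(n)} \geq \delta) \geq \gamma$: with probability at least $\gamma$ the walk $S_k$ is at height at least $\delta$ after step one, whence Wald's identity applied to the subsequent first-passage time yields $\E[N^{(n)}] \geq \gamma\delta/\mu^{(n)}$, and $\E[P^{(n)}] = \E[B^{(n)}]\E[N^{(n)}]$ (Wald again) gives $\E[P^{(n)}] = \Omega((1-\rho^{(n)})^{-1})$, matching the upper bound from the $\kappa=1$ case. The chief obstacle in the whole argument is the ladder-height moment bound on $N^{(n)}$: with $O(1)$ increments but $o(1)$ drift, the moments of the first-passage time live on the diffusion time scale $1/(\mu^{(n)})^2$, and extracting them with constants uniform in $n$ as $\rho^{(n)} \to 1$ is precisely where the sharp inequalities of Lotov become essential rather than softer fixed-distribution arguments.
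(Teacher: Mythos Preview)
Your overall strategy is sound and genuinely different from the paper's. The reduction of $\E[(P^{(n)})^\kappa]$ to $\E[(N^{(n)})^\kappa]$ via $P^{(n)}=\sum_{i\le N^{(n)}} B_i^{(n)}$ is correct; in fact you do not need the martingale splitting at all, since for non-negative i.i.d.\ summands and a stopping time $N^{(n)}$ the convexity inequality (Lemma~5 of \citet{remerova2014random}, also used in the paper) already gives $\E[(P^{(n)})^\kappa]\le \E[(N^{(n)})^\kappa]\,\E[(B^{(n)})^\kappa]$ directly. Your Burkholder--Davis--Gundy step as written is slightly off (for $\kappa>2$ one needs the Burkholder--Rosenthal form with two terms, not the single product you wrote), but either term is lower order than $\E[(N^{(n)})^\kappa]$, so the conclusion survives. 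Your argument for the lower bound $\E[P^{(n)}]=\Omega((1-\r^{(n)})^{-1})$ via Wald and the overshoot condition is essentially the content of the paper's Lemma~\ref{lem:IN}.

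The substantive gap is in what you call ``the main technical step'': bounding $\E[(N^{(n)})^\kappa]$ for $\kappa>1$. You propose to combine Wiener--Hopf ladder-height identities with Lotov's inequalities, but Lotov's results (as the paper uses them) control only the \emph{first} moment of the ladder epoch; they do not by themselves yield the higher-moment bound $\E[(N^{(n)})^\kappa]=\O((1-\r^{(n)})^{1-2\kappa})$, and your sketch does not indicate which identity would produce it. The paper avoids this issue entirely by a different route: it passes to the stationary-excess variable, writing $\E[(P^{(n)})^\kappa]=\kappa\,\E[P^{(n)}]\,\E[(P_e^{(n)})^{\kappa-1}]$, represents $P_e^{(n)}$ as the hitting time $\inf\{\tau\ge 0: B^{(n)}_e+W^{(n)}+\tilde X^{(n)}(\tau)\le 0\}$ (\citealp[Thm.~X.3.4]{asmussen2003applied}), and bounds its tail by the tail of $B^{(n)}_e$ plus the tail of the supremum of a tilted netput process, which is distributed as the workload $\tilde W^{(n)}$ in an auxiliary $\gg$ queue with load $(1+\r^{(n)})/2$. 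The entire higher-moment content is thus pushed into a workload moment bound (Lemma~\ref{lem:Mkappa}), proved via a decomposition into an $\mg$ and a $\gm$ piece; Lotov enters only through Lemma~\ref{lem:IN} for $\kappa=1$. So your route is viable if you can supply an independent higher-moment ladder argument, but as it stands that step is asserted rather than proved, and it is precisely where the paper's method diverges from yours.
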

\begin{proposition}
\label{prop:Nkappa}
Assume $\sup_{n\in\{1,2,\ldots\}} \E[(B^{(n)})^\alpha]<\infty$ for some $\a\geq 2$. Then
\begin{equation}
 \E[(N^{(n)})^\kappa] = \O\left((1-\r^{(n)})^{1-2\kappa}\right).
 \label{eq:Nkappa}
\end{equation}
for all $\kappa \in [1,\alpha]$. Moreover, $\E[N^{(n)}] = \Theta \left((1-\r^{(n)})^{-1}\right)$.
\end{proposition}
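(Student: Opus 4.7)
The plan is to run the argument in parallel with Proposition~\ref{prop:Pkappa} by exploiting the random-walk representation of the busy period. Setting $X_i := B^{(n)}_i - A^{(n)}_{i+1}$ and $S_k := \sum_{i=1}^k X_i$, one has $\E[X_i] = -\mu^{(n)} < 0$, and the number of jobs $N^{(n)} = \inf\{k \geq 1 : S_k \leq 0\}$ is the first descending ladder epoch of the walk. The relation $S_{N^{(n)}} = P^{(n)} - \sum_{i=1}^{N^{(n)}} A^{(n)}_{i+1}$ yields both the Wald decomposition $P^{(n)} = \sum_{i=1}^{N^{(n)}} B^{(n)}_i$ and the bridging identity $\sum_{i=1}^{N^{(n)}} A^{(n)}_{i+1} = P^{(n)} + |S_{N^{(n)}}|$, which I will use to transfer moment bounds from $P^{(n)}$ to $N^{(n)}$.

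First I would establish the $\Theta$-claim. Since $\E[N^{(n)}] < \infty$ is ensured by the negative drift of $(S_k)$, Wald's identity applied to $P^{(n)} = \sum_{i=1}^{N^{(n)}} B^{(n)}_i$ gives $\E[P^{(n)}] = \E[N^{(n)}]\E[B^{(n)}]$. Because $\E[B^{(n)}]$ is uniformly bounded between $\rho_0 A_{\min}$ and $\sqrt{\limsup_n \E[(A^{(n)})^2]}$, the scaling $\E[P^{(n)}] = \Theta((1-\rho^{(n)})^{-1})$ from Proposition~\ref{prop:Pkappa} immediately gives $\E[N^{(n)}] = \Theta((1-\rho^{(n)})^{-1})$.

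For the upper bound on $\E[(N^{(n)})^\kappa]$ with $\kappa > 1$, I would write $N^{(n)} \E[A^{(n)}] = \sum_{i=1}^{N^{(n)}}A^{(n)}_{i+1} - R^{(n)}$ with $R^{(n)} := \sum_{i=1}^{N^{(n)}}(A^{(n)}_{i+1} - \E[A^{(n)}])$ a centered stopped sum. Minkowski's inequality then yields
\[
 \|N^{(n)}\|_\kappa \, \E[A^{(n)}] \leq \|P^{(n)}\|_\kappa + \|S_{N^{(n)}}\|_\kappa + \|R^{(n)}\|_\kappa .
\]
The first summand is $O((1-\rho^{(n)})^{(1-2\kappa)/\kappa})$ by Proposition~\ref{prop:Pkappa}. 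The overshoot $\|S_{N^{(n)}}\|_\kappa$ should be $O(1)$ uniformly in $n$ via the descending-ladder-height moment bounds of \citet{lotov2002inequalities} and \citet[Ch.~VIII]{asmussen2003applied}, afforded by $\sup_n \E[(B^{(n)})^\alpha]<\infty$. The martingale term $\|R^{(n)}\|_\kappa$ is handled by a Burkholder-Davis-Gundy or Rosenthal inequality for stopped i.i.d.\ sums, producing a bound in terms of $\|N^{(n)}\|_{\kappa/2}$ and moments of $A^{(n)}$. An induction on $\kappa$ from the $\kappa=1$ base case above shows that $\|R^{(n)}\|_\kappa$ is of strictly lower order than $\|P^{(n)}\|_\kappa$, so $\|N^{(n)}\|_\kappa$ inherits the scaling of $\|P^{(n)}\|_\kappa$ and the claim follows after raising to the $\kappa$-th power.

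The main obstacle will be the uniform-in-$n$ control of the overshoot moments $\|S_{N^{(n)}}\|_\kappa$ up to $\kappa = \alpha$: the drift $\mu^{(n)} \downarrow 0$ causes the generic ladder-height bounds to degenerate in the heavy-traffic regime, and one must appeal to the sharper boundary-crossing inequalities of \citeauthor{lotov2002inequalities}. A secondary technical hurdle is the BDG/Rosenthal estimate on $\|R^{(n)}\|_\kappa$ for $\kappa > 2$, since only a uniform second-moment condition on $A^{(n)}$ is assumed; this likely calls for a truncation of the interarrival times or a variant of the estimate that requires only $\limsup_n \E[(A^{(n)})^2] < \infty$.
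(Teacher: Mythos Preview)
Your approach is genuinely different from the paper's. The paper does not use a Wald/Minkowski/BDG decomposition at all; instead it mirrors the proof of Proposition~\ref{prop:Pkappa} verbatim by passing to the excess variable $N^{(n)}_e$, writing it as the first-passage time $\inf\{\eta\ge 0: B^{(n)}_e + W^{(n)} + \tilde S^{(n)}_\eta \le 0\}$, and bounding $\P(N^{(n)}_e > m)$ by the same three tail terms that appeared for $P^{(n)}_e$. The embedding $S^{(n)}_\eta = X^{(n)}(\tau^{(n)}_\eta)$ with $\tau^{(n)}_\eta = A^{(n)}_1 + \cdots + A^{(n)}_\eta$ then reduces the discrete supremum to the continuous-time one already handled, and Lemma~\ref{lem:Mkappa} closes the argument. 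For $\kappa=1$ the paper simply quotes Lemma~\ref{lem:IN} rather than going through Wald and Proposition~\ref{prop:Pkappa}. This buys a short, unified proof that never touches moments of $A^{(n)}$ beyond the second.

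Your route can be made to work, but as written the two obstacles you flag are real and truncating $A^{(n)}$ is not the right repair. Both problematic terms---the overshoot $|S_{N^{(n)}}|$ and the centred stopped sum $R^{(n)}$ built from the $A^{(n)}_i$---ultimately demand $\kappa$-th moments of $A^{(n)}$: Rosenthal's second term is $\E[N^{(n)}]\,\E|A^{(n)}-\E A^{(n)}|^\kappa$, and the elementary bound $|S_{N^{(n)}}| \le A^{(n)}_{N^{(n)}+1}$ likewise leads back to $\E[(A^{(n)})^\kappa]$. Only $\sup_n \E[(A^{(n)})^2] < \infty$ is assumed, so both break down for $\kappa>2$. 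The clean fix is to switch the decomposition to the $B$-side: from $P^{(n)} = \sum_{i=1}^{N^{(n)}} B^{(n)}_i$ write $N^{(n)}\,\E[B^{(n)}] = P^{(n)} - R'^{(n)}$ with $R'^{(n)} = \sum_{i=1}^{N^{(n)}}(B^{(n)}_i - \E[B^{(n)}])$. Then Minkowski gives $\|N^{(n)}\|_\kappa\,\E[B^{(n)}] \le \|P^{(n)}\|_\kappa + \|R'^{(n)}\|_\kappa$, the overshoot term disappears entirely, and Rosenthal/BDG applied to $R'^{(n)}$ needs only $\sup_n \E[(B^{(n)})^\kappa]$, which is available for all $\kappa \le \alpha$. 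Your halving induction on $\kappa$ then goes through exactly as sketched.
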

Both propositions are proven in Section~\ref{sec:moments}.

\remark{
When applied to the Example Model, Proposition~\ref{prop:Pkappa} states that $\E[(P^{(r)})^\kappa]$ is uniformly bounded from above by $C_2(1-r)^{1-2\kappa}$, for some constant $C_2$. Alternatively, the integer moments of the busy period duration in an $\mg$ queue can be calculated explicitly from its Laplace-Stieltjes transform, yielding $\E[P^{(r)}] = \frac{\E[B]}{1-r}$ and $\E[(P^{(r)})^2] = \frac{\E[B^2]}{(1-r)^3}$. One may therefore conclude that the asymptotic behaviour of the bound in Proposition~\ref{prop:Pkappa} is in fact sharp for the first two moments of the busy period duration $P^{(r)}$ in the Example Model.
}

From Propositions~\ref{prop:Pkappa} and \ref{prop:Nkappa} it follows that, for some constant $C_3$,
\begin{align*}
 \frac{1}{\E[N^{(n)}]} \E\left[ P^{(n)} N^{(n)} \ind(N^{(n)}>N^{(n)}_0)\right] \hspace{-40pt} & \\
  &\leq C_3 (1-\r^{(n)})^{\left(1-2\frac{s}{s-1}\right)\frac{s-1}{s}} (1-\r^{(n)})^{-\frac{3}{2}} (1-\r^{(n)})^{1-\frac{2-s}{2s}} (1-\r^{(n)})^{\frac{2-s}{2s}\zeta} \\
  &= C_3 (1-\r^{(n)})^{-1-\frac{1}{s}} (1-\r^{(n)})^{-\frac{3}{2}} (1-\r^{(n)})^{\frac{3}{2}-\frac{1}{s}} (1-\r^{(n)})^{\frac{2-s}{2s}\zeta} \\
  &= C_3 (1-\r^{(n)})^{\frac{2-s}{2s}\zeta-\frac{2+s}{s}}. \numberthis
\end{align*}
Comparing this to $\log(N^{(n)}_0) \E[T^{(n)}_{\srpt}]$ and noting that $\E[T^{(n)}_{\srpt}]\geq \E[B^{(n)}]$ yields
\begin{align*}
 \limsup_{\ntoinfty} \frac{\frac{1}{\E[N^{(n)}]} \E\left[ P^{(n)} N^{(n)} \ind(N^{(n)}>N^{(n)}_0)\right]}{\log(N_0^{(n)}) \E[T^{(n)}_{\srpt}]}
  &\leq \limsup_{\ntoinfty} \frac{C_3 (1-\r^{(n)})^{\frac{2-s}{2s}\zeta-\frac{2+s}{s}}}{\zeta \log\left(\frac{1}{1-\r^{(n)}}\right) \E[B^{(n)}]} \\
  &\leq \limsup_{\ntoinfty} \frac{C_3 (1-\r^{(n)})^{\frac{2-s}{2s}\zeta-\frac{2+s}{s}}}{\zeta \log\left(\frac{1}{1-\r^{(n)}}\right) \rho_0 A_{\min}},
\end{align*}
which tends to zero as $\ntoinfty$. This completes the proof of Theorem~\ref{thm:main}.

\section{Moments of busy period functionals}
\label{sec:moments}
This section proves several results on the moments of functionals. First, we introduce some new notation in Section~\ref{subsec:netputprocesses}. Then, we state and prove two lemmas in Section~\ref{subsec:preliminarylemmas} in order to prove Propositions~\ref{prop:Pkappa} and \ref{prop:Nkappa}. Sequentially, the propositions are proven in Sections~\ref{subsec:Pkappa} and \ref{subsec:Nkappa}. We emphasise that all of the functionals considered are independent of the scheduling policy, provided that it is work-conserving.

\subsection{Counting and netput processes}
\label{subsec:netputprocesses}
For any non-negative random variable $Y$, we define a random variable $Y_e$ that is distributed as the excess of $Y$; i.e.\ $\P(Y_e\leq x) = \int_0^x \P(Y>x) \dd x /\E[Y]$.
%
Next, we define two counting processes in the $\gg$ queue under consideration. The first process $N^{(n)}(t):=\inf\{m\in\{1,2,\ldots\}: A^{(n)}_1+\ldots +A^{(n)}_m \geq t\}, t\geq 0,$ counts the number of arrivals in $t$ time units, starting from a reference arrival that is also the first count. The second process $\tilde{N}^{(n)}(t), t\geq 0,$ is similar and only differs by initializing the count at an arbitrary point in time. Specifically,
\begin{equation*}
 \tilde{N}^{(n)}(t) = \left\{ \begin{array}{ll}
   0 & \text{if } t<A^{(n)}_e, \\
   \inf\{m\in\{1,2,\ldots\}: A^{(n)}_{e}+A^{(n)}_1+\ldots +A^{(n)}_m \geq t\} \qquad\, & \text{otherwise}.
  \end{array} \right.
\end{equation*}
These counting processes allow us to introduce two netput processes, $X^{(n)}(t) = \sum_{i=1}^{N^{(n)}(t)} B^{(n)}_i - t$ and $\tilde{X}^{(n)}(t) = \sum_{i=1}^{\tilde{N}^{(n)}(t)} B^{(n)}_i - t$, that quantify the net amount of work that could have been processed by the server in the $t$ time units after an arrival, or respectively after an arbitrary point in time
. Note that $X(t)$ becomes negative right after the first time that the queue is emptied. One may verify that $\P(\tilde{X}^{(n)}(t) > x) \leq \P(X^{(n)}(t) > x)$ for all $t\geq 0$, which will be denoted by $\tilde{X}^{(n)}(t)\leq_{st} X^{(n)}(t)$ in the remainder of this paper.

Similarly, we define two discrete processes that quantify the netput process \textit{at an arrival instance}, denoted by $S^{(n)}_m$ and $\tilde{S}^{(n)}_m, m\geq 0$. The process $S^{(n)}_m$ is defined as $S^{(n)}_0:=0, S^{(n)}_m:=\sum_{i=1}^m [B^{(n)}_i-A^{(n)}_i]$ and quantifies all the work that the server has received between the arrival of the reference job and the $m$-th next arrival, minus the work that it could have addressed during this time. The process $\tilde{S}^{(n)}_m$ starts observing at an arbitrary point in time instead of at the arrival of a reference job. It is defined as $\tilde{S}^{(n)}_0 = -A^{(n)}_e, \tilde{S}^{(n)}_m = -A^{(n)}_e+\sum_{i=1}^m [B^{(n)}_i-A^{(n)}_i]$. Again, we obtain the relation $\tilde{S}^{(n)}_m \leq_{st} S^{(n)}_m$. Also, one may verify that $\sup_{t\geq 0} X^{(n)}(t) = \sup_{m\in\{1,2,\ldots\}} S^{(n)}_m$, and hence by \citet[Cor.~III.6.5]{asmussen2003applied} we have $\sup_{t\geq 0} X^{(n)}(t) = \sup_{m\in\{0,1,\ldots\}} S^{(n)}_m \stackrel{d}{=} W^{(n)}$. 
Here, $\cdot \stackrel{d}{=} \cdot$ denotes equality in distribution. 
All sums $\sum_{i=1}^0$ are understood to be zero.


\subsection{Preliminary lemmas}
\label{subsec:preliminarylemmas}
The following two lemmas facilitate the proof of Propositions~\ref{prop:Pkappa} and \ref{prop:Nkappa}. Lemma~\ref{lem:IN} concerns the first moment of $N^{(n)}$ and $I^{(n)}$, whereas Lemma~\ref{lem:Mkappa} considers general moments of $W^{(n)}$.


\begin{lemma}
\label{lem:IN}
The relations
\begin{equation}
 (1-\r^{(n)}) \E[N^{(n)}] = \Theta(1) \text{ and } \E[I^{(n)}] = \Theta(1)
 \label{eq:IN}
\end{equation}
both hold as $n\rightarrow \infty$.
\end{lemma}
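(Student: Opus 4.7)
The plan is to reduce both statements of the lemma to the single claim $\E[I^{(n)}]=\Theta(1)$, and then bound $\E[I^{(n)}]$ from above and below.

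For the reduction, I would use the random-walk representation of a busy period. Let $S^{(n)}_m:=\sum_{i=1}^m(B^{(n)}_i-A^{(n)}_{i+1})$, which has negative drift $\E[B^{(n)}-A^{(n)}]=-(1-\r^{(n)})\E[A^{(n)}]$. Iterating Lindley's recursion inside a busy period gives $W^{(n)}_m=S^{(n)}_{m-1}$ for $1\leq m\leq N^{(n)}$, hence $N^{(n)}=\inf\{m\geq 1:S^{(n)}_m\leq 0\}$ is the first descending ladder epoch and $I^{(n)}=A^{(n)}_{N^{(n)}+1}-W^{(n)}_{N^{(n)}}-B^{(n)}_{N^{(n)}}=-S^{(n)}_{N^{(n)}}$ is the first descending ladder height. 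Wald's identity then yields
\[
\E[I^{(n)}]=-\E[S^{(n)}_{N^{(n)}}]=-\E[N^{(n)}]\,\E[B^{(n)}-A^{(n)}]=(1-\r^{(n)})\,\E[A^{(n)}]\,\E[N^{(n)}].
\]
Since the model hypotheses yield $A_{\min}\leq\E[A^{(n)}]\leq\sqrt{\E[(A^{(n)})^2]}$ with $\limsup_n\E[(A^{(n)})^2]<\infty$, we have $\E[A^{(n)}]=\Theta(1)$, so the two claims of the lemma are equivalent to the single statement $\E[I^{(n)}]=\Theta(1)$.

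For the lower bound, restrict to the event $\{N^{(n)}=1\}=\{B^{(n)}\leq A^{(n)}\}$, on which $I^{(n)}=A^{(n)}-B^{(n)}$, giving $\E[I^{(n)}]\geq\E[(A^{(n)}-B^{(n)})^+]$. Then
\[
\E[(A^{(n)}-B^{(n)})^+]=\E[(B^{(n)}-A^{(n)})^+]+(1-\r^{(n)})\E[A^{(n)}]\geq\E[(B^{(n)}-A^{(n)})^+]\geq\delta\gamma,
\]
where the last inequality is the model assumption $\P(B^{(n)}-A^{(n)}\geq\delta)\geq\gamma$. Hence $\E[I^{(n)}]\geq\delta\gamma$ uniformly in $n$.

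The main obstacle is the upper bound. The naive estimate $I^{(n)}\leq A^{(n)}_{N^{(n)}+1}$ is far too loose: the interarrival that ends a busy period is size-biased and may grow with $\r^{(n)}$ (in $\mm$, for instance, $\E[A^{(n)}_{N^{(n)}+1}]\to\infty$while $\E[I^{(n)}]$ remains bounded). I would instead invoke uniform bounds on the mean first descending ladder height of $\{S^{(n)}_m\}$, in the spirit of the inequalities of Lotov (2002) (cited by the authors) together with the ladder-height identities in Asmussen (2003, Ch.~VIII), to control $\E[-S^{(n)}_{N^{(n)}}]$ by a functional of the step distribution that remains uniformly bounded as $\r^{(n)}\uparrow 1$ under the hypothesis $\limsup_n\E[(A^{(n)})^2]<\infty$. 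This yields $\E[I^{(n)}]\leq C$ for some constant $C$ independent of $n$, completing the proof.
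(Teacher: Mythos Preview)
Your proposal is correct and follows essentially the same route as the paper. Both reduce to $\E[I^{(n)}]=\Theta(1)$ via the Wald/ladder-height identity $\E[I^{(n)}]=\mu^{(n)}\E[N^{(n)}]$ (you derive it directly, the paper cites Asmussen, Prop.~X.3.1), and both invoke Lotov (2002) for the upper bound under the second-moment hypothesis on $A^{(n)}$. The only notable difference is the lower bound: the paper cites Lotov's inequality~(2) to get $\E[I^{(n)}]\geq\E[(B^{(n)}-A^{(n)})^+]\geq\delta\gamma$, whereas you obtain the slightly stronger $\E[I^{(n)}]\geq\E[(A^{(n)}-B^{(n)})^+]$ by the elementary restriction to $\{N^{(n)}=1\}$; both reach the same constant $\delta\gamma$.
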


\begin{proof}[Proof of Lemma~\ref{lem:IN}.]
Since we have $\mu^{(n)} = \E[A^{(n)}](1-\r^{(n)}) = \Theta(1-\r^{(n)})$, it suffices to prove the relation $\mu^{(n)}\E[N^{(n)}] = \Theta(1)$. Proposition~X.3.1 in \citet{asmussen2003applied}, stating
\begin{equation}
 \E[I^{(n)}] = 
 \mu^{(n)}\E[N^{(n)}],
 \label{eq:relationIN}
\end{equation}
then implies that this is equivalent to the relation $\E[I^{(n)}]=\Theta(1)$.

Both the upper and the lower bound follow from \citet{lotov2002inequalities}, who considers the ladder height of a random walk. Specifically, 
Lotov obtains upper bounds for the moments of the ladder epochs and the moments of overshoot over an arbitrary non-negative level if the expectation of jumps is positive and close to zero. As such, his results apply to the random walk $-S(n)$ with ladder epochs $N^{(n)}$.

The upper bound is implied by Theorem~2 in \citet{lotov2002inequalities}, which claims that
\begin{equation}
 \mu^{(n)} \E[N^{(n)}] \leq C_4
\end{equation}
for some constant $C_4$ and all $n$, provided that $\sup_{n\in\{1,2,\ldots\}} \E[(\max\{A^{(n)}-B^{(n)},0\})^2]<\infty$. Accordance with this condition follows directly from $\sup_{n\in\{1,2,\ldots\}} \E[(A^{(n)})^2]<\infty$.

The lower bound is implied by inequality (2) in \citet{lotov2002inequalities}. In our model, we assumed that there exist constants $\d>0, \g>0$ such that $\P(B^{(n)} - A^{(n)} \geq \d)\geq \g$ for all $n$. \citet{lotov2002inequalities} then states
\begin{align*}
 \mu^{(n)}\E[N^{(n)}] \geq \int_0^\infty x \dd \P(B^{(n)}-A^{(n)} \leq x) \geq \d \g
\end{align*}
for all $n$. This completes the proof.
\end{proof}

\begin{lemma}
\label{lem:Mkappa}
 Let $p>0$ and define $q=\max\{2,p+1\}$. Assume that $\sup_{n\in\{1,2,\ldots\}} \E[(B^{(n)})^q] < \infty$. Then
\begin{equation}
 \limsup_{\ntoinfty} (1-\r^{(n)})^p\E[(W^{(n)})^p]<\infty.
 \label{eq:Mkappa}
\end{equation}
\end{lemma}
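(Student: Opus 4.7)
The inequality is a heavy-traffic moment bound on the steady-state waiting time of a GI/G/1 queue, of Kingman type. Since Section~\ref{subsec:netputprocesses} records $W^{(n)} \stackrel{d}{=} \sup_{m \geq 0} S^{(n)}_m$, the task is to bound moments of the all-time supremum of a random walk $S^{(n)}$ with i.i.d.\ steps $X^{(n)} = B^{(n)} - A^{(n)}$ and drift $-\mu^{(n)}$ where $\mu^{(n)} = \Theta(1-\r^{(n)})$. The plan is to use the Wiener--Hopf representation
\[
 W^{(n)} \stackrel{d}{=} \sum_{k=1}^{K^{(n)}} H^{(n)}_k,
\]
in which $H^{(n)}_k$ are i.i.d.\ ascending ladder heights of $S^{(n)}$, independent of a geometric random variable $K^{(n)}$ with parameter $q^{(n)} := \P(\tau_+^{(n)} = \infty) \in (0,1)$, and then to estimate the moments of $K^{(n)}$ and of $H^{(n)}$ separately.

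\textbf{Key steps.} First, show $q^{(n)} = \Omega(1-\r^{(n)})$, from which $\E[(K^{(n)})^p] = O((1-\r^{(n)})^{-p})$ follows for every $p \geq 0$. This is obtained by combining Kingman's first-moment bound $\E[W^{(n)}] = O(1/\mu^{(n)})$ (derived by squaring Lindley's identity $W \stackrel{d}{=} (W+X)^+$ and using $\E[X^{(n)}] = -\mu^{(n)}$) with the Wald-type identity $\E[W^{(n)}] = \frac{1-q^{(n)}}{q^{(n)}} \E[H^{(n)}]$ and the uniform lower bound $\E[H^{(n)}] \geq \d\g$ implied by the model assumption $\P(X^{(n)} \geq \d) \geq \g$ (the first step of $S^{(n)}$ is already at least $\d$ with probability at least $\g$, so the first strict ascending ladder height is at least $\d$ with the same probability). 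Second, show $\sup_n \E[(H^{(n)})^p] < \infty$ whenever $\sup_n \E[(B^{(n)})^{p+1}] < \infty$. This is a Lotov-type overshoot estimate of the same flavour as the bound invoked in the proof of Lemma~\ref{lem:IN}: the $p$-th moment of the ascending ladder height of a random walk with small negative drift is bounded by a universal multiple of $\E[((X^{(n)})^+)^{p+1}] \leq C\,\E[(B^{(n)})^{p+1}]$, uniformly over walks whose step distribution satisfies the non-degeneracy conditions supplied by $\sup_n \E[(A^{(n)})^2] < \infty$ and $\P(X^{(n)} \geq \d) \geq \g$.

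\textbf{Combining and main obstacle.} For $p \geq 1$, the power-mean inequality $(h_1 + \cdots + h_K)^p \leq K^{p-1}\sum_j h_j^p$, together with the independence of $K^{(n)}$ and $(H_k^{(n)})$, yields
\[
 \E[(W^{(n)})^p] \leq \E[(K^{(n)})^p]\,\E[(H^{(n)})^p] = O\!\left((1-\r^{(n)})^{-p}\right).
\]
For $p \in (0,1)$, Jensen's inequality $\E[(W^{(n)})^p] \leq \E[W^{(n)}]^p$ reduces the statement to the $p = 1$ case, which requires only $q = 2$ moments of $B^{(n)}$, matching $q = \max\{2,p+1\} = 2$. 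The main technical obstacle is the uniform-in-$n$ control of $\E[(H^{(n)})^p]$: translating the moment condition on $B^{(n)}$ into a uniform bound on the ladder-height moment requires a scale-invariant application of the Lotov inequalities so that no hidden constants blow up as $\r^{(n)} \uparrow 1$.
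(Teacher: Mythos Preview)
Your approach is correct and genuinely different from the paper's. Both routes rest on a geometric random-sum representation of $W^{(n)}$ together with the convexity bound $\E[(\sum_{i=1}^K Y_i)^p] \le \E[K^p]\E[Y^p]$ (which the paper cites as \citet[Lemma~5]{remerova2014random}), and both reduce $p\in(0,1)$ to $p=1$ via Jensen. The difference lies in \emph{which} random sum is used. You work directly with the Wiener--Hopf decomposition of the GI/GI/1 walk $S^{(n)}$, so your summands are the conditional ascending ladder heights $H^{(n)}$ and your geometric parameter is $q^{(n)}=\P(\tau_+^{(n)}=\infty)$. The paper instead inserts an auxiliary exponential sequence $E^{(n)}_i$ with mean $(\E[A^{(n)}]+\E[B^{(n)}])/2$ and bounds $W^{(n)} \le W^{(n)}_1 + W^{(n)}_2$, where $W^{(n)}_1$ and $W^{(n)}_2$ are the stationary workloads of an $\mg$ and a $\gm$ queue. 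In those two special queues the ladder heights are \emph{explicit}---they are $B_e^{(n)}$ and $E^{(n)}$ respectively---so the uniform moment bounds on the summands become one-line computations (equation~\eqref{eq:Bep} and the exponential moments), and the only work left is the geometric moment estimate. Your route is conceptually cleaner (no auxiliary process), while the paper's split trades that elegance for making the hard step disappear.

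That hard step is exactly the one you flag: the uniform bound $\sup_n \E[(H^{(n)})^p] < \infty$. Your invocation of Lotov here is slightly off target. Lotov's estimates, as used in Lemma~\ref{lem:IN}, concern the ladder height and epoch of a walk with small \emph{positive} drift; applied to $-S^{(n)}$ they control the \emph{descending} ladder of $S^{(n)}$ (i.e.\ $I^{(n)}$ and $N^{(n)}$), not the defective ascending ladder height $H^{(n)}$ you need. To finish your argument one has to pass through the Wiener--Hopf identity $G_+^{(n)}(x,\infty)=\int_{[0,\infty)} \P(X^{(n)}>x+y)\,U_-^{(n)}(dy)$ and then show that the descending-ladder renewal measure $U_-^{(n)}$ is uniformly linearly bounded, which in turn requires a uniform bound on $\E[(I^{(n)})^2]$; Lotov does supply this, but under a moment hypothesis on $(A^{(n)}-B^{(n)})^+$ that you should check is covered by $\sup_n \E[(A^{(n)})^2]<\infty$. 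Once that renewal bound is in place, a Fubini computation gives $\E[(H^{(n)})^p;\tau_+<\infty] \le C\,\E[((X^{(n)})^+)^{p+1}]$, and the factor $1/(1-q^{(n)})$ from conditioning is harmless because $1-q^{(n)} \ge \P(X^{(n)}\ge\d)\ge\g$. So your outline can be completed, but it is not a one-line citation of Lotov; the paper's $\mg+\gm$ trick is precisely a device to avoid this detour.
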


\remark{
 Consider the Example Model, and assume that jobs are served according to the $\fifo$ discipline. Then $W^{(r)}$ is just the waiting time of a job, and hence for some constant $C_5$ the average sojourn time $\E[T^{(r)}_\fifo] = \E[W^{(r)}_\fifo] + \E[B] \leq \frac{C_5}{1-r} + \E[B]$ scales no worse than $1/(1-r)$. 
 Lemma~\ref{lem:Mkappa} provides bounds on the work $W^{(r)}$ at an arrival for more general moments, provided that a sufficiently high moment of the job size distribution exists.
}

\begin{proof}[Proof of Lemma~\ref{lem:Mkappa}.]
Since $\sup_{n\in\{1,2,\ldots\}} \E[A^{(n)}]<\infty$, relation \eqref{eq:Mkappa} is equivalent to
\begin{equation}
 (\mu^{(n)})^p\E[(W^{(n)})^p]<\infty,
\end{equation}
which is proven below.

Assume $p\geq 1$ and let $E^{(n)}_i,i\in\{1,2,\ldots\},$ be independent exponentially distributed random variables with mean
\[\E[E^{(n)}] = \frac{\E[A^{(n)}]+\E[B^{(n)}]}{2} < \E[A^{(n)}].\] 
We define $E^{(n)} := E^{(n)}_1$ and note that $\sup_{n\in\{1,2,\ldots\}} \E[E^{(n)}] \leq \sup_{n\in\{1,2,\ldots\}} \E[A^{(n)}] < \infty$.

By \citet[Cor.~III.6.5]{asmussen2003applied} and subadditivity of suprema, $W^{(n)}$ is upper bounded as
 \begin{align*}
  W^{(n)} &\stackrel{d}{=} \sup_{m\in\{0,1,2,\ldots\}} \sum_{i=1}^m \left[ B^{(n)}_i - A^{(n)}_i \right] 
   \leq \sup_{m\in\{0,1,2,\ldots\}} \sum_{i=1}^m \left[ B^{(n)}_i - E^{(n)}_i \right] + \sup_{m\in\{0,1,2,\ldots\}} \sum_{i=1}^m \left[ E^{(n)}_i - A^{(n)}_i \right] \\
   &=: W^{(n)}_1 + W^{(n)}_2,
 \end{align*}
where $W^{(n)}_1$ can be interpreted as the total work in an $\mg$ queue as observed by an arrival, and $W^{(n)}_2$ as the total work in an $\gm$ queue as observed by an arrival. As a consequence,
$
 \P(W^{(n)} > x) \leq \P(W^{(n)}_1 +  W^{(n)}_2 > x) \leq \P(W^{(n)}_1 > x/2) + \P(W^{(n)}_2 > x/2)
$ and thus
\begin{align*}
 \E[(W^{(n)})^p] &= p \int_0^\infty x^{p-1} \P(W^{(n)} > x) \dd x \\
  &\leq p \int_0^\infty x^{p-1} \P(W^{(n)}_1 > x/2) \dd x + p \int_0^\infty x^{p-1} \P(W^{(n)}_2 > x/2) \dd x \\
  &= 2^p \left(\E[(W^{(n)}_1)^p]+\E[(W^{(n)}_2)^p]\right).
\end{align*}

First, we consider $W^{(n)}_1$. Define the geometrically distributed random variable $K^{(n)}_1$ with support $\{0,1,\ldots\}$ and fail parameter
\[\xi^{(n)}_1:=\frac{\E[B^{(n)}]}{\E[E^{(n)}]} = \frac{2\E[B^{(n)}]}{\E[A^{(n)}]+\E[B^{(n)}]}.\]
For notational convenience, we drop the superscript $(n)$ of $\xi_1^{(n)}$ for the remainder of this section. 
Theorem~VIII.5.7 in \citet{asmussen2003applied} presents a random sum representation of the functional $W^{(n)}_1$ in terms of $K^{(n)}_1$ and $B^{(n)}_{e,i}$:
\begin{equation*}
 W^{(n)}_1 \stackrel{d}{=} \sum_{i=1}^{K^{(n)}_1} B^{(n)}_{e,i}.
\end{equation*}
Since $f(x)=x^p$ is a convex function for all $p\geq 1$, Lemma~5 in \citet{remerova2014random} implies
\begin{equation}
 \E[(W^{(n)}_1)^p] \leq \E[(K^{(n)}_1)^p] \E[(B^{(n)}_{e})^p].
 \label{eq:W1p}
\end{equation}
The conditions of Lemma~\ref{lem:Mkappa} ensure that the $p$-th moment of $B^{(n)}_{e}$ is finite as $\ntoinfty$:
\begin{align*}
 \E[(B^{(n)}_{e})^p] &= \int_0^\infty x^p \dd \P(B^{(n)}_{e}\leq x) 
  = \frac{1}{\E[B^{(n)}]} \int_0^\infty x^p \P(B^{(n)}> x) \dd x \\
  &= \frac{1}{(p+1)\E[B^{(n)}]} \int_0^\infty x^{p+1} \dd \P(B^{(n)}\leq x)
  = \frac{1}{(p+1)\E[B^{(n)}]} \E[(B^{(n)})^{p+1}]. \numberthis \label{eq:Bep}
\end{align*}
Therefore, we need to show that $(\mu^{(n)})^p\E[(K^{(n)}_1)^p]$ is uniformly bounded as $\ntoinfty$.
Let $k=\lfloor p\rfloor$. Then
\begin{align*}
 \E[(K^{(n)}_1)^p] &= \frac{1-\xi_1}{(1-\xi_1)^p} \sum_{m=0}^\infty ((1-\xi_1)m)^p \xi_1^m \\
  &\leq \frac{1-\xi_1}{(1-\xi_1)^p} \sum_{m=0}^{\left\lfloor \frac{1}{1-\xi_1}\right\rfloor} ((1-\xi_1)m)^k \xi_1^m + \frac{1-\xi_1}{(1-\xi_1)^p} \sum_{m=\left\lfloor \frac{1}{1-\xi_1}\right\rfloor + 1}^\infty ((1-\xi_1)m)^{k+1} \xi_1^m \\
  &\leq \frac{(1-\xi_1)^{k+1}}{(1-\xi_1)^p} \sum_{m=0}^\infty m^k \xi_1^m
   + \frac{(1-\xi_1)^{k+2}}{(1-\xi_1)^p} \sum_{m=0}^\infty m^{k+1} \xi_1^m. \numberthis \label{eq:expectation}
\end{align*}
On the one hand, for any $\ell\in\{1,2,\ldots\}$, we have
\begin{align*}
 (1-\xi_1)^{\ell+1} \xi_1^\ell \frac{\dd^\ell}{\dd \xi_1^\ell} \sum_{m=0}^\infty \xi_1^m &= (1-\xi_1)^{\ell+1} \xi_1^\ell \frac{\dd^\ell}{\dd \xi_1^\ell} (1-\xi_1)^{-1} = \ell! \xi_1^\ell. \numberthis \label{eq:diffG1}
\end{align*}
On the other hand we have
\begin{align*}
 (1-\xi_1)^{\ell+1} \xi_1^\ell \frac{\dd^\ell}{\dd \xi_1^\ell} \sum_{m=0}^\infty \xi_1^m 
  &= (1-\xi_1)^{\ell+1} \sum_{m=\ell}^\infty m(m-1)\cdots(m-\ell+1) \xi_1^m \\
  &= (1-\xi_1)^{\ell+1} \sum_{m=0}^\infty m^\ell \xi_1^m - (1-\xi_1)^{\ell+1} \sum_{m=0}^{\ell-1} m^\ell \xi_1^m 
    + (1-\xi_1)^{\ell+1} \sum_{m=\ell}^\infty o(m^\ell) \xi_1^m. \numberthis \label{eq:diffG2}
\end{align*}
Combining equalities \eqref{eq:diffG1} and \eqref{eq:diffG2}, we find that
\[(1-\xi_1)^{\ell+1} \sum_{m=0}^\infty m^\ell \xi_1^m = \ell! \xi_1^\ell + (1-\xi_1)^{\ell+1} \sum_{m=0}^{\ell-1} m^\ell \xi_1^m
   + (1-\xi_1)^{\ell+1} \sum_{m=\ell}^\infty o(m^\ell) \xi_1^m. \]
Now, for any $\nu>0$ there exists a $M_\nu\in\{1,2,\ldots\}$ independent of the system index $n$ such that for all $m\geq M_\nu$ the $o(m^\ell)$ term is dominated by $\nu m^\ell$. Fix such $\nu\in(0,1)$ and $M_\nu$. Then
\begin{align*}
 (1-\xi_1)^{\ell+1} \sum_{m=0}^\infty m^\ell \xi_1^m &\leq \ell! + \ell^{\ell+1} + (1-\xi_1)^{\ell+1} \nu \sum_{m=M_\nu}^\infty m^\ell \xi_1^m + (1-\xi_1)^{\ell+1} \sum_{m=0}^{M_\nu} o(m^\ell) \xi_1^m \\
  &\leq C_6 + (1-\xi_1)^{\ell+1} \nu \sum_{m=0}^\infty m^\ell \xi_1^m
\end{align*}
for some constant $C_6>0$, and hence
\begin{equation}
 (1-\xi_1)^{\ell+1} \sum_{m=0}^\infty m^\ell \xi_1^m \leq \frac{C_6}{1-\nu}.
 \label{eq:geoSum}
\end{equation}
Since $(\mu^{(n)}/(1-\xi_1))^p = (\E[A^{(n)}]+\E[B^{(n)}])^p$, we may conclude from relations~\eqref{eq:expectation} and \eqref{eq:geoSum} that $(\mu^{(n)})^p \E[(K^{(n)}_1)^p]$ is uniformly bounded from above as $\ntoinfty$, and so is $(\mu^{(n)})^p \E[(W^{(n)}_1)^p]$ by \eqref{eq:W1p}.

Second, we consider the functional $W^{(n)}_2$. Recall that $W^{(n)}_2$ denotes the steady-state workload in a $\gm$ queue upon arrival. Theorem~VIII.5.8 and page 296 in \citet{asmussen2003applied} together state that
\begin{equation}
 W^{(n)}_2 \stackrel{d}{=} \sum_{i=1}^{K^{(n)}_2} E^{(n)}_i,
 \label{eq:W2}
\end{equation}
where $K^{(n)}_2$ is a geometrically distributed random variable with support $\{0,1,\ldots\}$ and unknown fail parameter $\xi^{(n)}_2$. \citet{remerova2014random} again ensure that
\begin{equation}
 \E[(W^{(n)}_2)^p] \leq \E[(K^{(n)}_2)^p] \E[(E^{(n)})^p],
 \label{eq:W2p}
\end{equation}
where the latter expectation is finite uniformly in $n$ as a property of exponential distributions. The $p$-th moment of $K^{(n)}_2$ is bounded by \eqref{eq:expectation} and \eqref{eq:geoSum}, so that
\begin{equation}
 (\mu^{(n)})^p \E[(K^{(n)}_2)^p] \leq C_7 \left(\frac{\mu^{(n)}}{1-\xi^{(n)}_2}\right)^p
\end{equation}
for some constant $C_7>0$. The proof is complete once we show $\frac{\mu^{(n)}}{1-\xi^{(n)}_2} = \O(1)$.

From \eqref{eq:W2} it can be verified that $\P(W^{(n)}_2=0) = \P(K^{(n)}_2=0) = 1-\xi^{(n)}_2$. Additionally, by Theorem~VIII.2.3 in \citet{asmussen2003applied}, we have $\P(W^{(n)}_2=0)=1/\E[N^{(n)}_2]$ and hence $\E[N^{(n)}_2] = 1/(1-\xi^{(n)}_2)$. Here, $N^{(n)}_2$ is the steady-state number of jobs in a busy period of the $\gm$ queue. Applying Lemma~\ref{lem:IN} with interarrival times $A^{(n)}_i$, job sizes $E^{(n)}_i$, and mean service between arrivals $\frac{1}{2}\mu^{(n)}$ yields $\frac{1}{2}\mu^{(n)}\E[N^{(n)}_2]=\Theta(1)$, and therefore $\mu^{(n)}/(1-\xi^{(n)}_2) = \O(1)$.

Finally, for $0<p<1$ the lemma follows directly from the case $p=1$ after observing that $(\mu^{(n)})^p\E[(W^{(n)})^p]\leq \left(\mu^{(n)}\E[W^{(n)}]\right)^p$ by Jensen's inequality.
\end{proof}

Lemmas~\ref{lem:IN} and \ref{lem:Mkappa} provide the asymptotic behaviour of functionals that are closely related to $P^{(n)}$ and $N^{(n)}$. The remainder of this section utilizes these results in order to prove Propositions~\ref{prop:Pkappa} and \ref{prop:Nkappa}.

\subsection[Busy period duration P]{Busy period duration $P^{(n)}$}
\label{subsec:Pkappa}
This section is devoted to the proof of Proposition~\ref{prop:Pkappa}. We wish to show that
\begin{equation*}
 \E[(P^{(n)})^\kappa] = \O\left((1-\r^{(n)})^{1-2\kappa} \right).
 \tag{\ref{eq:Pkappa}, revisited}
\end{equation*}
for all $\kappa \in [1,\alpha]$, provided that $\sup_{n\in\{1,2,\ldots\}} \E[(B^{(n)})^\alpha]<\infty$ for some $\a\geq 2$. Moreover, we claim that $\E[P^{(n)}] = \Theta \left((1-\r^{(n)})^{-1}\right)$.

First, consider $\kappa=1$. Due to Little's law for a busy server, we have
\begin{equation}
1-\r^{(n)} = \frac{\E[I^{(n)}]}{\E[I^{(n)}]+\E[P^{(n)}]},
\end{equation}
so that
\begin{equation}
\E[P^{(n)}] = \frac{\r^{(n)} \E[I^{(n)}]}{1-\r^{(n)}}.
\end{equation}
The result now follows from Lemma~\ref{lem:IN}.




Second, consider $\kappa>1$. Similar to \eqref{eq:Bep}, one obtains $\E[(P^{(n)}_e)^{\kappa-1}] = \E[(P^{(n)})^\kappa]/(\kappa \E[P^{(n)}])$ and hence it suffices to show that $\E[(P^{(n)}_e)^{\kappa-1}] = \O((1-\r^{(n)})^{2(1-\kappa)} )$. We have the following convenient representation for $P^{(n)}_e$ \citep[Thm.~X.3.4]{asmussen2003applied}: 
\begin{align*}
 P^{(n)}_e &\stackrel{d}{=} \inf\{ \tau \geq 0: \tilde{X}^{(n)}(\tau) \leq -V^{(n)} \mid V^{(n)}>0\} 
  \stackrel{d}{=} \inf\{ \tau \geq 0: B^{(n)}_e + W^{(n)} + \tilde{X}^{(n)}(\tau) \leq 0 \}.
\end{align*}
The above relation allows us to bound $\P(P^{(n)}_e>t)$ as
\begin{align*}
 \P(P^{(n)}_e>t) &= \P(\inf\{ \tau \geq 0: B^{(n)}_e + W^{(n)} + \tilde{X}^{(n)}(\tau) \leq 0 \}> t) \\
  &= \P(B^{(n)}_e + W^{(n)} + \tilde{X}^{(n)}(\tau) > 0 , \forall \tau\leq t) \\
  &\leq \P(B^{(n)}_e + W^{(n)} + \tilde{X}^{(n)}(t) + (1-\r^{(n)})t/2> (1-\r^{(n)})t/2) \\
  &\leq \P(B^{(n)}_e> (1-\r^{(n)})t/6) + \P(W^{(n)} > (1-\r^{(n)})t/6) \\
   &\qquad + \P\left(\sup_{\tau\geq 0} [\tilde{X}^{(n)}(\tau) + (1-\r^{(n)})\tau/2] > (1-\r^{(n)})t/6 \right).
\end{align*}
In Section~\ref{subsec:netputprocesses} we derived the relations $\tilde{X}^{(n)}(\tau) \leq_{st} X^{(n)}(\tau)$ and $W^{(n)} \stackrel{d}{=} \sup_{\tau\geq 0} X^{(n)}(\tau)$. These relations imply
\begin{equation}
 \P(P^{(n)}_e>t) \leq \P(B^{(n)}_e>(1-\r^{(n)})t/6) 
  + 2\P\left(\sup_{\tau\geq 0} [X^{(n)}(\tau)+ (1-\r^{(n)})\tau/2] > (1-\r^{(n)})t/6\right).
 \label{eq:Pkappaparts}
\end{equation}
The last inequality suggests that
\begin{align*}
 \E[(P^{(n)}_e)^{\kappa-1}] &= (\kappa-1)\int_0^\infty t^{\kappa-2} \P(P^{(n)}_e>t) \dd t \\
  &\leq (\kappa-1)\int_0^\infty t^{\kappa-2} \P(B^{(n)}_e>(1-\r^{(n)})t/6) \dd t \\
   &\qquad + 2(\kappa-1)\int_0^\infty t^{\kappa-2} \P\left(\sup_{\tau\geq 0} [X^{(n)}(\tau)+ (1-\r^{(n)})\tau/2] > (1-\r^{(n)})t/6\right) \dd t.
\end{align*}

To deal with the first term, note that
\begin{align*}
 (\kappa-1)\int_0^\infty t^{\kappa-2}\P(B^{(n)}_e>(1-\r^{(n)})t/6) \dd t \hspace{-150pt} & \hspace{150pt}
  = (\kappa-1)\int_0^\infty t^{\kappa-2}\P(6B^{(n)}_e/(1-\r^{(n)}) >t) \dd t \\
  &= \E\left[\left(\frac{6B^{(n)}_e}{1-\r^{(n)}}\right)^{\kappa-1}\right]
  = \O\left((1-\r^{(n)})^{1-\kappa}\right) \E[(B^{(n)}_e)^{\kappa-1}]
  = o\left((1-\r^{(n)})^{2(1-\kappa)}\right),
\end{align*}
since $\E[(B^{(n)})^{\kappa}]<\infty$ implies $\E[(B^{(n)}_e)^{\kappa-1}]<\infty$ (cf.\ \ref{eq:Bep}).

For the second term, observe that
\begin{align*}
\sup_{\tau\geq 0} [X^{(n)}(\tau)+ (1-\r^{(n)})\tau/2] \hspace{-44pt} & \hspace{44pt}
  = \sup_{\tau\geq 0} \left[\sum_{i=1}^{N^{(n)}(\tau)}B^{(n)}_i-\tau+ (1-\r^{(n)})\tau/2 \right] \\
  &= \sup_{\tau\geq 0} \left[\sum_{i=1}^{N^{(n)}(\tau)}B^{(n)}_i-\frac{1+\r^{(n)}}{2}\tau \right] 
  = \sup_{n\in \{0,1,2,\ldots\}} \left[\sum_{i=1}^{n} \left\{B^{(n)}_i-\frac{1+\r^{(n)}}{2} A^{(n)}_i\right\} \right] \\
  &=: \tilde{W}^{(n)},
\end{align*}
where $\tilde{W}^{(n)}$ is equal in distribution to the steady-state cumulative amount of work at arrival in a $\gg$ queue with job sizes $B^{(n)}_i$ and interarrival times $\frac{1+\r^{(n)}}{2}A^{(n)}_i,i\in\{1,2,\ldots\}$. The mean amount of work that the server completes between two consecutive arrivals in this system is then given by $\tilde{\mu}^{(n)}:=\frac{1+\r^{(n)}}{2}\E[A^{(n)}] - \E[B^{(n)}] = \frac{1-\r^{(n)}}{2}\E[A^{(n)}]$.
We therefore obtain
\begin{align*}
 (\kappa-1) \int_0^\infty t^{\kappa-2} \P\left(\sup_{\tau\geq 0} [X^{(n)}(\tau)+ (1-\r^{(n)})\tau/2] > (1-\r^{(n)})t/6\right) \dd t \hspace{-92pt} & \\
  &= (\kappa-1)\int_0^\infty t^{\kappa-2} \P\left(\frac{6}{1-\r^{(n)}} \tilde{W}^{(n)} > t\right) \dd t \\
  &= \frac{6^{\kappa-1}}{(1-\r^{(n)})^{\kappa-1}(\tilde{\mu}^{(n)})^{\kappa-1}} \E[(\tilde{\mu}^{(n)}\tilde{W}^{(n)})^{\kappa-1}] \\
  &\leq \frac{C_8}{(1-\r^{(n)})^{2(\kappa-1)}} \E[(\tilde{\mu}^{(n)}\tilde{W}^{(n)})^{\kappa-1}]
\end{align*}
for some constant $C_8>0$, by using Lemma~\ref{lem:IN}. Finally, $\E[(\tilde{\mu}^{(n)}\tilde{W}^{(n)})^{\kappa-1}]$ is bounded due to Lemma~\ref{lem:Mkappa}, which completes the proof of the proposition.

\subsection[Arrivals in a busy period N]{Arrivals in a busy period $N^{(n)}$}
\label{subsec:Nkappa}
This section contains the proof of Proposition~\ref{prop:Nkappa}. The proposition states that
\begin{equation*}
 \E[(N^{(n)})^\kappa] = \O\left((1-\r^{(n)})^{1-2\kappa}\right).
 \tag{\ref{eq:Nkappa}, revisited}
\end{equation*}
for all $\kappa \in [1,\alpha]$, provided that $\sup_{n\in\{1,2,\ldots\}} \E[(B^{(n)})^\alpha]<\infty$ for some $\a\geq 2$. Moreover, we claim that $\E[N^{(n)}] = \Theta \left((1-\r^{(n)})^{-1}\right)$.

The structure of the proof is identical to the proof of Proposition~\ref{prop:Pkappa}. 
For $\kappa=1$, the result follows directly from Lemma~\ref{lem:IN}. Therefore, we consider $\E[(N^{(n)})^\kappa]$ for $\kappa > 1$.
Similar to the proof of Proposition~\ref{prop:Pkappa}, we use the relation
\begin{equation}
 \E[(N^{(n)}_e)^{\kappa-1}]=\frac{\E[(N^{(n)})^\kappa]}{\kappa \E[N^{(n)}]}
\end{equation}
and note that
\begin{align*}
 N^{(n)}_e &\stackrel{d}{=} \inf \{\eta\in \{0,1,2,\ldots\}: \tilde{S}^{(n)}_\eta \leq -V^{(n)} \mid V^{(n)}>0\} 
  \stackrel{d}{=} \inf \{\eta\in \{0,1,2,\ldots\}: B^{(n)}_e+ W^{(n)}+\tilde{S}^{(n)}_\eta \leq 0\}.
\end{align*}
As before, the relations $\tilde{S}^{(n)}_\eta \leq_{st} S^{(n)}_\eta$ and $W^{(n)} \stackrel{d}{=} \sup_{\eta\in\{0,1,\ldots\}} S^{(n)}_\eta$ are exploited in order to obtain an equivalent of~\eqref{eq:Pkappaparts}:
\begin{equation*}
 \P(N^{(n)}_e>m) \leq \P(B^{(n)}_e> (1-\r^{(n)})m/6) + 2\P\left(\sup_{\eta\in \{0,1,2,\ldots\}} [S^{(n)}_\eta + (1-\r^{(n)})\eta/2] > (1-\r^{(n)})m/6 \right).
\end{equation*}
For any $\eta\in\{0,1,2,\ldots\}$, consider $\tau^{(n)}_\eta := A^{(n)}_1 + \ldots + A^{(n)}_\eta$. Then $S^{(n)}_\eta = X^{(n)}(\tau^{(n)}_\eta)$, so in particular
\begin{equation*}
 \P(N^{(n)}_e>m) \leq \P(B^{(n)}_e>(1-\r^{(n)})m/6) + 2\P\left(\sup_{\tau \geq 0} [X^{(n)}(\tau) + (1-\r^{(n)})\tau/2] > (1-\r^{(n)})m/6\right).
\end{equation*}
The remainder of the proof is identical to that of Proposition~\ref{prop:Pkappa}.

\section{Conclusion}
\label{sec:conclusion}
In this paper, we proved a result about the average case performance of (an extension of) the Randomized Multilevel Feedback ($\rmlf$) algorithm in a $\gg$ queue. Specifically, the gap in average sojourn time between the $\rmlf$ algorithm and the Shortest Remaining Processing Time algorithm behaves like $\O(\log (1/(1-\r^{(n)})))$ and this bound is tight for the $\mm$ queue. An appealing property of the $\rmlf$ algorithm is that its implementation does not depend on the input distributions $F^{(n)}_A$ and $F^{(n)}_B$; however, if $F^{(n)}_A$ and $F^{(n)}_B$ are known then there can be blind algorithms with a better performance than $\rmlf$ (e.g.\ Foreground-Background if $F^{(n)}_B$ has decreasing failure rate). 
The result was established by using techniques 
from both competitive analysis and applied probability. As the structure of the proof is quite general, it would be interesting to explore other possibilities in the intersection of these areas.

\section*{Acknowledgements}
This work is part of the free competition research programme with project number 613-001-219, which is financed by the Netherlands Organisation for Scientific Research (NWO). We would like to thank Adam Wierman for his encouragements to work on this research topic.

\appendix
The $\dirmlf$ algorithm is presented after the introduction of some notation. Define the queues $\tilde{Q}_z, z\in\Z$ and a ``new job'' queue $\tilde{Q}^*$. Let the targets $\tilde{U}_{z,j}$ be given by $\tilde{U}_{z,j} = 2^{z}\max\{1,2-\tilde{\beta}_j\}$, where the $\tilde{\beta}_j$'s are independent random variables with exponential cumulative distribution function $\P(\tilde{\beta}_j \leq x) = 1-\exp[-\theta x \ln j]$. Identical to the $\rmlf$ algorithm, $\theta$ is a symbolic constant fixed at $\theta=12$. All symbols $\tilde{J}_j, \tilde{r}_j, \tilde{B}_j$ and $\tilde{w}_j(t)$ are defined analogue to the symbols without accent in the $\rmlf$ algorithm. All release times $\tilde{r}_j$ must be distinct (e.g.\ all interarrival times are strictly positive), and jobs may have any size $\tilde{B}_j\geq 0$. Note that the original $\rmlf$ algorithm requires the job sizes to be uniformly bounded from below, but does not restrict the interarrival times to be non-zero.

Every job $\tilde{J}_h$ is assigned an initial target $\tilde{U}_{*,h}$ upon arrival, after which it is immediately served in $\tilde{Q}^*$ by a dedicated server. It departs from $\tilde{Q}^*$ on three occasions:
\begin{itemize}
\item The amount of service received equals the size $\tilde{B}_h$ of the job. In this case, $\tilde{J}_h$ is completed and leaves the system.
\item A new job enters the system. In this case, $\tilde{J}_h$ is moved to a queue $\tilde{Q}_{z},z\in\Z,$ that it naturally belongs to based on the amount of service $\tilde{w}_h(t)$ it has obtained thus far; that is, it is moved to the unique queue $\tilde{Q}_{z^*_h}$ that satisfies $\tilde{U}_{z^*_h-1,h} \leq \tilde{w}_h(t) < \tilde{U}_{z^*_h,h}$.
\item The amount of service received equals the initial target $\tilde{U}_{*,h}$. In this case, similar to the previous case, $\tilde{J}_h$ is moved to a queue $\tilde{Q}_{z}$ that it naturally belongs to.
\end{itemize}
The choice of the initial target $\tilde{U}_{*,h}$ depends on the system state:
\begin{itemize}
\item If the system is empty upon arrival, then the server is dedicated to $\tilde{J}_h$ regardless of the queue that $\tilde{J}_h$ is in. In this case, the target can be chosen arbitrarily; we set it to $\tilde{U}_{*,h}=\tilde{U}_{0,h}$.
\item If the system is not empty upon arrival, then there must be a lowest-index non-empty queue $\tilde{Q}_{z^*_h}$ (possibly after moving the job originally in $\tilde{Q}^*$ to another queue). $\tilde{J}_h$ may now experience a dedicated server until the moment when it would enter queue $\tilde{Q}_{z^*_h}$ based on its obtained service and the $(z^*_h-1)$-th target $\tilde{U}_{z^*_h-1,h}$. Therefore, $\tilde{J}_h$ should be moved no later then after $\tilde{U}_{*,h}=\tilde{U}_{z^*_h-1,h}$ units of obtained service.
\end{itemize}
If $\tilde{Q}^*$ is empty, then $\dirmlf$ always works on the non-empty queue $\tilde{Q}_z$ with the lowest index $z\in \Z$. Jobs in $\tilde{Q}_z$ are always served in a First Come First Serve order.

The $\dirmlf$ is formally presented in Figure~\ref{fig:dirmlf}.
\begin{figure}[!ht]
 \begin{center}
 \fbox{
\begin{minipage}{.85\textwidth}
\small{\textbf{Algorithm $\dirmlf$:} At all times the collection of released, but uncompleted, jobs are partitioned into queues, $\tilde{Q}^*,\tilde{Q}_z, z\in \Z$. We say that $\tilde{Q}_i$ is lower than $\tilde{Q}_j$ for $i<j$. $\tilde{Q}^*$ is the lowest queue. For each job $\tilde{J}_j\in \tilde{Q}_i, \tilde{U}_{i,j}\in [2^i,2^{i+1}]$ when it entered $\tilde{Q}_i$. $\dirmlf$ maintains the invariant that it is always running the job at the front of the lowest non-empty queue.
\\
When a job $\tilde{J}_h$ is released at time $\tilde{r}_h$, $\dirmlf$ takes the following actions:
\begin{itemize}
\item If, just prior to $\tilde{r}_h$, all queues were empty, then
 \begin{itemize}
 \item Job $\tilde{J}_{h}$ is enqueued on $\tilde{Q}^*$.
 \item The initial target $\tilde{U}_{*,h}$ is set to $\tilde{U}_{0,h}=\max\{1,2-\tilde{\beta}_h\}$.
 \end{itemize}
\item If, just prior to $\tilde{r}_h$, there are unfinished jobs in the system but $\tilde{Q}^*$ is empty, then
 \begin{itemize}
 \item Job $\tilde{J}_{h}$ is enqueued on $\tilde{Q}^*$.
 \item The initial target $\tilde{U}_{*,h}$ is set to $\tilde{U}_{z^*_h-1,h}=2^{z^*_h-1}\max\{1,2-\tilde{\beta}_h\}$, where the queue index $z^*_h=\min\{z\in \Z: \tilde{Q}_z \text{ non-empty at time } t\}$ corresponds to the lowest non-empty queue.
 \end{itemize}
\item If, just prior to $\tilde{r}_h$, $\tilde{Q}^*$ is non-empty, then $\tilde{Q}^*=\{\tilde{J}_{h-1}\}$ at that time. Now,
 \begin{itemize}
 \item The target $\tilde{U}_{z^*_h,h-1}=2^{z^*_h}\max\{1,2-\tilde{\beta}_{h-1}\}$ with $z^*_h:= \min\{z\in \Z: \tilde{w}_{h-1}(\tilde{r}_h) \leq \tilde{U}_{z,h-1}\} \}$ is the lowest target not yet reached by job $\tilde{J}_{h-1}$.
 \item Job $\tilde{J}_{h-1}$ is dequeued from $\tilde{Q}^*$.
 \item Job $\tilde{J}_{h-1}$ is enqueued on $\tilde{Q}_{z^*_h}$.
 \item Job $\tilde{J}_{h}$ is enqueued on $\tilde{Q}^*$.
 \item The initial target $\tilde{U}_{*,h}$ is set to $\tilde{U}_{z^*_h-1,h}=2^{z^*_h-1} \max\{1,2-\tilde{\beta}_h\}$.
 \end{itemize}
\item If, just prior to $\tilde{r}_h$, it was the case that $\dirmlf$ was running a job $\tilde{J}_j$, then $\tilde{J}_j$ is pre-empted.
\item $\dirmlf$ begins running $\tilde{J}_h$.
\end{itemize}
If at some time $t$, a job $\tilde{J}_j\in \tilde{Q}_{z-1}$ is being run when $\tilde{w}_j(t)$ becomes equal to $\tilde{U}_{z-1,j}$, then $\dirmlf$ takes the following actions:
\begin{itemize}
\item Job $\tilde{J}_j$ is dequeued from $\tilde{Q}_{z-1}$.
\item Job $\tilde{J}_j$ is enqueued on $\tilde{Q}_z$.
\item The target $\tilde{U}_{z,j}$ is set to $2\tilde{U}_{z-1,j} = 2^z\max\{1,2-\tilde{\beta}_j\}$.
\end{itemize}
If at some time $t$, a job $\tilde{J}_j\in \tilde{Q}^*$ is being run when $\tilde{w}_j(t)$ becomes equal to $\tilde{U}_{*,j}$, then $\dirmlf$ takes the following actions:
\begin{itemize}
\item Job $\tilde{J}_j$ is dequeued from $\tilde{Q}^*$.
\item Job $\tilde{J}_j$ is enqueued on $\tilde{Q}_{z^*_h}$, where $z^*_h = \log_2 \left(\tilde{w}_j(t)/\max\{1,2-\tilde{\beta}_j\}\right)+1$.
\item The target $\tilde{U}_{z^*_h,j}$ is set to $2\tilde{U}_{*,j} = 2^{z^*_h}\max\{1,2-\tilde{\beta}_j\}$.
\end{itemize}
Whenever a job is completed, it is removed from its queue.}
\end{minipage}
 }
 \end{center}
\caption{Formal statement of $\dirmlf$ algorithm.}\label{fig:dirmlf}
\end{figure}
Observe that both $\rmlf$ and $\dirmlf$ preserve the ordering of the jobs; that is, if job $\tilde{J}_j$ is released prior to job $\tilde{J}_k$ then as long as both jobs are incomplete:
\begin{itemize}
 \item job $\tilde{J}_j$ will never be in a lower queue than job $\tilde{J}_k$, and
 \item if both jobs are in the same queue, then job $\tilde{J}_j$ is closer to the front of the queue than job $\tilde{J}_k$.
\end{itemize}



We are now ready to prove Theorem~\ref{thm:dirmlf}, stating that
 \begin{equation*}
   \E[T_\dirmlf(\mathcal{I})] \leq C_1 \log(m) \E[T_\srpt(\mathcal{I})]
   \tag{\ref{eq:dirmlf}, revisited}
 \end{equation*}
 for all instances $\mathcal{I}$ of size at most $m$ for a universal constant $C_1$. This constant is identical to the constant $C_1$ in Theorem~\ref{thm:rmlf}.

\begin{proof}[Proof of Theorem~\ref{thm:dirmlf}.]
Consider any instance $\tilde{\mathcal{I}}$ for $\dirmlf$ of size at most $m$. 
Since all jobs of size zero are immediately served in queue $\tilde{Q}^*$ upon arrival, we assume without loss of generality that the instance does not contain any jobs of size zero. As a consequence, the minimum job size $\tilde{B}_{\min} = \min_{j=1,\ldots,|\tilde{\mathcal{I}}|} \tilde{B}_j$ is strictly positive. 
We now transform the instance $\tilde{\mathcal{I}}$ for $\dirmlf$ to a corresponding instance $\mathcal{I}$ for $\rmlf$.

Define the scaling parameter $g := \lfloor \log_2 (\tilde{B}_{\min})\rfloor - 1 \in \Z$, satisfying $2^{-g} \tilde{B}_{\min} \geq 2$. The instance $\mathcal{I}$ consists out of $|\tilde{\mathcal{I}}|$ jobs that are scaled versions of the original $|\tilde{\mathcal{I}}|$ jobs; specifically, job $J_j$ has size $B_j := 2^{-g} \tilde{B}_j$ and release date $r_j := 2^{-g} \tilde{r}_j$. Then, the smallest job is of size at least $2$ and the $\rmlf$ algorithm may be applied to the instance $\mathcal{I}$.

Since the jobs are released in the same order as in the original instance, we note that the random variables $\beta_j$ assigned by $\rmlf$ have the same distribution as the $\tilde{\beta}_j$ assigned by $\dirmlf$. We therefore couple these random variables in a trivial way: $\beta_j \equiv \tilde{\beta}_j$ for all $j=1,\ldots,|\tilde{\mathcal{I}}|$. It immediately follows that the targets $\tilde{U}_{z,j}$ as assigned to $\tilde{\mathcal{I}}$ by $\dirmlf$ and the targets $U_{i,j}$ as assigned to $\mathcal{I}$ by $\rmlf$ satisfy $\tilde{U}_{i,j} = U_{i,j}$ for all $i\in\{0,1,2,\ldots\}$. Additionally, the initial $\rmlf$ target $U_{0,j}$ satisfies $U_{0,j}=\tilde{U}_{0,j} = 2^{g-g}\max\{1,2-\tilde{\beta}_j\} = 2^{-g} \tilde{U}_{g,j}$.

We will show that the above construction implies an equivalence between $\rmlf$ and $\dirmlf$. For all $z\in \Z$ and $t\geq 0$ define the sets
\begin{equation}
 \tilde{Q}_z(t) := \left\{\tilde{J}_j: \tilde{U}_{z-1,j} \leq \tilde{w}_j(t) < \tilde{U}_{z,j} \right\}
\end{equation}
that contain all jobs in the system at time $t$ that are in queue $\tilde{Q}_z$, \textit{or the most recently released job if it has received a similar amount of service}. The equivalence is first observed between the initial $\rmlf$ queue $Q_0(t)$ and the augmented $\dirmlf$ queue $\hat{Q}(t)$, defined as
\begin{equation}
 Q_0(t) := \left\{J_j: w_j(t) < U_{0,j} \right\}
\end{equation}
and
\begin{equation}
 \hat{Q}(t) := \bigcup_{z=-\infty}^g \tilde{Q}_{z}(t) = \left\{\tilde{J}_j: \tilde{w}_j(t) < \tilde{U}_{g,j} \right\},
\end{equation}
respectively. One may observe that since $Q_0$ is the highest priority queue, it experiences a dedicated, work-conserving server that works at unit speed on jobs $J_j$ with sizes $U_{0,j}$. Therefore, the event that the $\rmlf$ server works on $Q_0$ is equivalent to the event $\{Q_0(t)>0\}$.

We assume without loss of generality that job $1$ arrives at time $r_1=0$. The arrival of this job initiates a first busy period for $Q_0(t)$ of $N_1$ jobs, where $N_1$ is such that the cumulative targets $U_{0,j}$ of the first $N_1$ jobs can be served before the $(N_1+1)$-th job is released. It is defined as $N_1 = \inf\{k\geq 1: \sum_{j=1}^k U_{0,j} - r_{j+1} \leq 0\}$, where $r_{|\tilde{\mathcal{I}}|+1}$ is understood as plus infinity. The duration of the busy period is given by $P_1 = \sum_{j=1}^{N_1} U_{0,j} = \sum_{j=1}^{N_1} 2^{-g} \tilde{U}_{g,j}$. The server may then work on jobs in higher queues (perceived as idle time by $Q_0$) until time $r_{N_1+1}$, when a new busy period is initiated. For $t\in[0,r_{N_1+1})$ we have now obtained
\begin{equation}
 Q_0(t) > 0 \Leftrightarrow t \leq P_1 \Leftrightarrow 2^{g} t \leq \sum_{j=1}^{N_1} \tilde{U}_{g,j}.
\end{equation}
By a similar analysis of the augmented queue $\hat{Q}$ we find that for all $t\in[0,2^g r_{N_1+1})=[0,\tilde{r}_{N_1+1})$ the relation $\hat{Q}(t) > 0 \Leftrightarrow Q_0(2^{-g} t) > 0$ holds, and for all $t\geq 0$ by a straightforward generalisation of the above procedure. Observing that both algorithms preserve the ordering of jobs, we may similarly show that the $\dirmlf$ server processes job $\tilde{J}_j$ in queue $\tilde{Q}_{g+i}$ at time $t$ if and only if the $\rmlf$ server processes job $J_j$ in queue $Q_i$ at time $2^g t$ for all $i\in\{1,2,\ldots\}$ and $t\geq 0$.

From the above results, one may deduce that the average sojourn time $\E[T_\dirmlf(\tilde{\mathcal{I}})]$ of instance $\tilde{\mathcal{I}}$ under algorithm $\dirmlf$ equals $2^g$ times the average sojourn time $\E[T_\rmlf(\mathcal{I})]$ of instance $\mathcal{I}$ under $\rmlf$. The competitive ratio of $\rmlf$ as stated in Theorem~\ref{thm:rmlf} hence guarantees that, for all instances $\tilde{\mathcal{I}}$ of size at most $m$,
\begin{equation}
 \E[T_\dirmlf(\tilde{\mathcal{I}})] = 2^g \E[T_\rmlf(\mathcal{I})] \leq C_1 \log(m) 2^g \E[T_\srpt(\mathcal{I})].
\end{equation}
The competitive ratio of $\dirmlf$ is concluded by verifying
\begin{equation}
 2^g \E[T_\srpt(\mathcal{I})] = \E[T_\srpt(\tilde{\mathcal{I}})],
\end{equation}
which is a direct consequence of our scaling. 
In particular, the constant $C_1$ in the upper bound is the same for $\rmlf$ and $\dirmlf$.
\end{proof}

\DeclareRobustCommand{\NLprefix}[3]{#3}.
\bibliographystyle{apalike}

\end{document}